\newtheorem{thm}{Theorem}[section]
\newtheorem{prop}[thm]{Proposition}
\newtheorem{lem}[thm]{Lemma}
\newtheorem{defn}[thm]{Definition}
\newtheorem{rem}[thm]{Remark}
\def\ROT#1\par{{\red #1\par}}
\long\def\BLAU#1\par{}
\newcommand{\bbA}{\mathbb{A}}
\newcommand{\bbN}{\mathbb{N}}
\newcommand{\bbQ}{\mathbb{Q}}
\newcommand{\bbZ}{\mathbb{Z}}
\newcommand{\calA}{\mathcal{A}}
\newcommand{\Chut}{\hat{C}}
\newcommand{\Fr}{\operatorname{Dec}}
\newcommand{\Ar}{\operatorname{Ar}}
\newcommand{\Aut}{\operatorname{Aut}}
\newcommand{\Gal}{\operatorname{Gal}}
\newcommand{\gal}{\operatorname{\mathit{Gal}}}
\newcommand{\id}{\operatorname{id}}  
\newcommand{\im}{\operatorname{im}}
\newcommand{\Ind}{\operatorname{Ind}}
\newcommand{\KO}{\operatorname{K_0}}
\newcommand{\Norm}{\operatorname{N}}
\newcommand{\Res}{\operatorname{Res}}
\newcommand{\spec}{\operatorname{spec}}
\newcommand{\Th}{\operatorname{Th}}
\newcommand{\Zhut}{\hat{\bbZ}}
\newcommand{\acl}[1]{\tilde{#1}}
\newcommand{\aclK}{\acl{K}}
\newcommand{\absGal}[1]{\Gal(\acl{#1}/#1)}
\newcommand{\GalK}{\absGal{K}}
\newcommand{\Psub}{\operatorname{Psub}}
\newcommand{\Ppart}{\operatorname{Ppart}}
\newcommand{\alphareg}{\alpha_{\mathrm{reg}}}
\newcommand{\Kvar}[1][k]{\KO(\mathrm{Var}_{#1})}
\newcommand{\Kmot}[1][k]{\KO(\mathrm{Mot}_{#1})}
\newcommand{\KmotQ}[1][k]{\KO(\mathrm{Mot}_{#1})_{\bbQ}}
\newcommand{\KT}[1][\gal,k]{\KO(T_{#1})}
\newcommand{\mmu}{\chi_c}   
\newcommand{\zmmu}{\hat{\chi}_{c}}
\newcommand{\calAfine}{\tilde{\calA}}
\newcommand{\ohne}{\smallsetminus}
\newcommand{\surject}{\twoheadrightarrow}
\newcommand{\inject}{\hookrightarrow}
\newcommand{\acts}{\looparrowright}
\newcommand{\dcup}{\mathrel{\dot{\cup}}}
\newcommand{\me}[1]{\{#1\}}
\newcommand{\gen}[1]{\langle#1\rangle}
\newcommand{\auf}[1]{\mathbin{|_{#1}}}
\newcommand{\opp}{^{\mathrm{opp}}}
\newcommand{\overto}[1]{\mathrel{%
  \overset{#1}{%
    {\text{\footnotesize\vphantom{x}}}\smash{\surject}%
  }%
}}
\newcommand{\VGW}[1][]{V#1 \overto{G#1}W#1}
\newcommand{\qqquad}{\qquad\quad}
\newcommand{\pu}{\,.}
\newcommand{\ko}{\,,}
\def\psas@i{newpath -1.5 1.5 1.5 180 360 arc stroke 0 1.5 moveto}
\def\psas@j{newpath 1.5 1.5 1.5 180 360 arc stroke 0 1.5 moveto}
\def\tx@Arrow{CLW mul add dup 2 div /w ED mul dup /h ED mul /a ED
{ 0 h T 1 -1 scale } if
 0    w mul 0.3  h mul moveto
-0.35 w mul 0.8  h mul  -0.9  w mul 1.02 h mul -1 w mul 1.07 h mul curveto
-1    w mul 0.9  h mul L
-0.9  w mul 0.85 h mul  -0.3 w mul  0.6  h mul  0 w mul 0    h mul curveto
 0.3  w mul 0.6  h mul   0.9 w mul  0.85 h mul  1 w mul 0.9  h mul curveto
 1    w mul 1.07 h mul L
 0.9  w mul 1.02 h mul   0.35 w mul 0.8  h mul  0 w mul 0.3  h mul curveto
gsave fill grestore }
\title{Motives for perfect PAC fields with pro-cyclic Galois group}
\author{Immanuel Halupczok}
\address{DMA\\
Ecole Normale Supérieure\\
45, rue d'Ulm\\
75230 Paris Cedex 05 -- France
}
\email{math@karimmi.de}
\thanks{The author was supported by the Agence National de la Recherche
(contract ANR-06-BLAN-0183-01).}
\keywords{pseudo-finite fields, pseudo algebraically closed fields,
motives}
\subjclass{03C10, 03C60, 03C98, 12L12, 12E30, 12F10, 14G15, 14G27}
\begin{document}

\begin{abstract}
Denef and Loeser defined a map from the Grothendieck ring of sets definable
in pseudo-finite fields to the Grothendieck ring of Chow motives,
thus enabling to apply any cohomological invariant to these sets.
We generalize this to 
perfect, pseudo algebraically closed fields with pro-cyclic Galois group.

In addition, we define some maps between different Grothendieck rings
of definable sets which provide additional information,
not contained in the associated motive.
In particular we infer that the map of Denef-Loeser is not injective.
\end{abstract}

\maketitle

\tableofcontents

\section{Preliminaries}

\subsection{Introduction}

To understand definable sets of a theory,
it is helpful to have invariants with nice properties.
For a fixed pseudo-finite field $K$, there are two well-known
invariants of definable sets: the dimension (see \cite{CH:dim}),
and the measure (see \cite{CDM}).

In a slightly different setting,
Denef and Loeser constructed a much stronger invariant:
they do not fix a pseudo-finite field; instead they consider
definable sets in the theory of all pseudo-finite fields of characteristic
zero. To each such set $X$ they associate an element $\mmu(X)$ of the
Grothendieck ring of Chow motives
(see \cite{DL}, \cite{DL2}). In particular, this implies that all the
usual cohomological invariants (like Euler characteristic, Hodge polynomial)
are now applicable to arbitrary definable sets.


The dimension defined in \cite{CH:dim} exists for a much larger class
of fields and in \cite{Hr:meas},
Hrushovski asked whether one can also generalize the measure. This
question has been answered in \cite{i:dm}: 
it is indeed possible to define a measure for any perfect,
pseudo algebraically closed (PAC)
field with pro-cyclic Galois group.
A natural question is now: can the work of Denef-Loeser
also be generalized to this setting?
More precisely, fix a torsion-free pro-cyclic group $\gal$
and consider the theory of perfect PAC fields with absolute Galois group
$\gal$. Then to any definable set $X$ in that theory
we would like to associate a virtual motive $\mmu(X)$.
The first goal of this article is to do this (Theorem~\ref{thm:main}).

One reason this result seems interesting to me is
the following: the map $\mmu$ exists for pseudo-finite fields
(by Denef-Loeser) and for algebraically closed fields
(by quantifier elimination).
The case of general pro-cyclic
Galois groups is a common generalization of both and thus a kind of
interpolation.

Comparing those maps $\mmu$ for different Galois groups, one gets the feeling
that they are closely related. Indeed, given an inclusion of Galois groups
$\gal_2 \subset \gal_1$, we will prove (in Theorem~\ref{thm:reduce})
the existence of a map $\theta$
from the definable sets for $\gal_2$ to the definable sets for $\gal_1$
which is compatible with the different maps $\mmu$.

These maps $\theta$ turn out to be interesting
in themselves.
An open question was whether the map $\mmu$ is injective for
pseudo-finite fields. We will show (Proposition~\ref{prop:example})
that it is not,
by giving an example of two definable sets with the same image under
$\mmu$ but with different images under one of those maps $\theta$.
This also means that at least in this case, the maps $\theta$ can
be used to get information which one loses by applying $\mmu$.

We have one more result. In \cite{DL2}, the map $\mmu$
is defined by enumerating certain properties and then
existence and uniqueness of such a map is proven.
We are able to weaken the conditions needed for uniqueness in the
case of pseudo-finite fields.
Unfortunately however, we do not get any sensible uniqueness conditions
for other pro-cyclic Galois groups.

\subsection{The results in detail}
\label{subsect:results}

Let us fix some notation once and for all.

By a ``group homomorphism'' we will always mean a continuous group
homomorphism if there are pro-finite groups involved.

We fix a field of parameters $k$ and a group $\gal$
which will serve as Galois group.
Sometimes, we will require $k$ to be of characteristic zero.
$\gal$ will always be a pro-cyclic group such that there do exist perfect
PAC fields having $\gal$ as absolute Galois group. This is the case
if and only if $\gal$ is torsion-free, or equivalently,
if it is of the form $\prod_{p \in P} \bbZ_p$, where $P$ is any set of
primes.

\BLAU UG $\Rightarrow$ Gal-Grp ist klar. Gal-Grp $\Rightarrow$ UG:
Prozyklisch $\Rightarrow$ ist Quotient von $\Zhut$. Quotienten kann man
auf jedem Faktor $\bbZ_p$ einzeln betrachten. Entweder man hat im Quotient
ganz $\bbZ_p$ oder nix oder $\bbZ/p^n\bbZ$. Wenn ein $\bbZ/p^n\bbZ$ vorkommt,
ist das eine Untergruppe, also entspricht das einem Oberkörper mit
Galois-Gruppe $\bbZ/p^n\bbZ$. Oder anderes Argument: $\bbZ/p^n\bbZ$ ist
nicht projektiv (im Gegensatz zu Galois-Gruppen von PAC-Körpern).

The theory we will be working in will be the theory of perfect PAC
fields with absolute Galois group $\gal$ which contain
$k$. We will denote this theory by $T_{\gal,k}$.
Models of $T_{\gal,k}$ will be denoted by $K$; the algebraic closure of
a field $K$ will be denoted by $\aclK$. By ``definable'' we
always mean 0-definable. (But $k$ is part of the language.)

By ``variety'', we mean a separated, reduced scheme of finite type.
If not stated otherwise, all our varieties will be over
$k$.\footnote{We will try to limit our notation such that
readers not so familiar with the language of schemes can use
a more naive definition of varieties. For those readers: our
varieties are not supposed to be irreducible.}

\BLAU
Zu Def. von Var: [BRNA] macht das ohne reduced, aber das aendert nix am
Grothendieckring.

We will use the notion ``definable set'' even when there is no model around:
by a ``definable set (in $T_{\gal,k}$)'', we mean a formula
up to equivalence modulo $T_{\gal,k}$.
In addition, we will permit ourselves to
speak about ``definable subsets of (arbitrary) varieties''.
For affine embedded varieties,
it is clear what this should mean. In general, any definable decomposition
of a variety $V$ into affine embedded ones yields the same notion of
definable subsets of $V$ (cf.\ ``definable sub-assignments'' in
\cite{DL}).


We will use the usual definitions of the following Grothendieck rings
(see e.g.\ \cite{DL} or \cite{DL2}): the Grothendieck ring of varieties
$\Kvar$, the Grothendieck ring of (Chow) motives $\Kmot$
and the Grothendieck ring $\KT$ of the theory $T_{\gal,k}$.
Moreover, we will often need to tensor the Grothendieck ring of motives
with $\bbQ$; we denote this by $\KmotQ := \Kmot \otimes_{\bbZ}\bbQ$.


Now let us state the generalization of the theorem of Denef-Loeser.
For the definition of ``Galois cover'' and ``$X(\VGW, \me{1})$'',
see Section~\ref{sect:gal}.

\begin{thm}\label{thm:main}
Suppose
$\gal = \prod_{p \in P} \bbZ_p$ (where $P$ is any set of primes)
is a torsion-free pro-cyclic group and $k$ is a field of characteristic
zero. Then there exists a (canonical) ring homomorphism
$\mmu\colon\KT \to \KmotQ$ extending the usual homomorphism
$\mmu\colon\Kvar \to \Kmot$ with the following property:
if $\VGW$ is a Galois cover such that all prime factors of
$|G|$ lie in $P$, then
\[
(*)\qquad \mmu(X(\VGW, \me{1})) = \frac{1}{|G|}\mmu(V)\pu
\]
If $\gal = \Zhut$, then a ring homomorphism with these properties is
unique.
\end{thm}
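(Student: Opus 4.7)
The strategy follows Denef--Loeser: establish a Galois stratification for definable sets in $T_{\gal,k}$ and then define $\mmu$ by forcing condition $(*)$ on the resulting generators.

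\emph{Step 1 (Galois stratification).} I would first prove, adapting the Fried--Sacerdote--Jarden Galois stratification to perfect PAC fields with Galois group $\gal$, that every class in $\KT$ is a $\bbZ$-linear combination of classes $[X(\VGW,C)]$ for Galois covers $\VGW$ over $k$ and conjugation-stable subsets $C \subseteq G$. Since decomposition subgroups in models of $T_{\gal,k}$ are cyclic of order only divisible by primes in $P$, one may further restrict to the case where every element of $C$ has order a $P$-number.

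\emph{Step 2 (Definition on generators).} For $X(\VGW,\me{1})$ with $|G|$ a $P$-number, $(*)$ prescribes the value of $\mmu$. For a general stratum $X(\VGW,C)$ with $C$ a union of conjugacy classes $[g_i]$ of $P$-order elements, I would reduce to the $C = \me{1}$ case by standard manipulations: each $X(\VGW,[g_i])$ can be identified with a stratum of the form $X(\Vtil_i \to W,\me{1})$ for an auxiliary Galois cover built from the cyclic subgroup $\gen{g_i}$ (whose order is a $P$-number, hence satisfying the hypothesis of $(*)$). The resulting formulas involve inverting group orders, necessitating the passage to $\KmotQ = \Kmot \otimes_{\bbZ} \bbQ$.

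\emph{Step 3 (Well-definedness).} The main technical obstacle is showing that the definition is independent of the chosen Galois-stratified representative. Two representatives admit a common refinement, so it suffices to handle a tower $\Vtil \to V \to W$ with $\Vtil \to W$ Galois of group $\Gtil$ surjecting onto $G = \Gal(V/W)$ with kernel $N$: one must verify that $[X(\VGW,C)]$, computed either via $V \to W$ or via $\Vtil \to W$ with the preimage of $C$, yields the same element of $\KmotQ$. This reduces to a motivic averaging identity for the $N$-action on $\Vtil$, essentially requiring $\bbQ$-coefficients.

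\emph{Step 4 (Ring structure and uniqueness).} Additivity on disjoint unions is immediate from the construction. Multiplicativity follows since the product $X(V_1 \overto{G_1} W_1, C_1) \times X(V_2 \overto{G_2} W_2, C_2)$ is itself a stratum $X(V_1 \times V_2 \to W_1 \times W_2,\, C_1 \times C_2)$ for the product Galois cover. For uniqueness when $\gal = \Zhut$, the set $P$ contains every prime, so every auxiliary cover from Step~2 satisfies the hypothesis of $(*)$, and every generator of $\KT$ is thereby pinned down. For $\gal \subsetneq \Zhut$, Step~1's generators may involve Galois covers whose group $G$ has prime factors outside $P$ (even though only $P$-order elements of $C$ are relevant), on which $(*)$ is silent---consistent with the paper's remark that no sensible uniqueness statement is available in this generality.
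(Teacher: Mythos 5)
Your plan has a genuine gap at its core: you never actually construct $\mmu$ for general $\gal$. You propose to \emph{define} $\mmu$ on generators by forcing $(*)$, after reducing an arbitrary stratum to splitting loci of auxiliary covers of $P$-order. But the identification claimed in Step~2 --- that $X(\VGW,[g_i])$ equals a stratum $X(\Vtil_i\to W,\me{1})$ over the \emph{same} base --- is false in general: already for the squaring cover of $\Gm$ (group $\zyk{2}$) the stratum with full Artin symbol is the set of non-squares, which is not the complete-splitting locus of any Galois cover of $\Gm$ defined over $k$ uniformly in the model. What is true (and what the paper uses, but only for uniqueness when $\gal=\Zhut$) is a two-step reduction: pass to the normalizer of $\gen{g_i}$ via a definable bijection to a set over a \emph{different} base $W'$ (Lemma~\ref{lem:bij}), then compare with the splitting locus of the quotient cover, which captures \emph{all} subgroups of $\gen{g_i}$ at once, and finish by inclusion--exclusion. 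Worse, as you concede in Step~4, when $\gal\subsetneq\Zhut$ the generators whose covering group has prime factors outside $P$ are not reached by any such reduction, so your recipe assigns them no value at all --- yet existence for those $\gal$ is precisely the new content of the theorem. The paper's existence proof does not define $\mmu$ by forcing $(*)$: it sets $\mmu(X(\VGW,C)):=\mmu(G\acts V,\alpha_C)$ using the equivariant-motive theorem of del Ba\~no--Navarro Aznar (Lemma~\ref{lem:mmu}), the decisive new ingredient being the central function $\alpha_C(g)=1$ iff $\Ppart(\gen{g})\in C$, which is exactly what handles groups with primes outside $P$; condition $(*)$ is then verified a posteriori from $\alpha_{\me{1}}=\frac{1}{|G|}\alphareg$.

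A second gap: a map on $\KT$ must be invariant under arbitrary definable bijections, not merely independent of the stratified representative of a fixed set. Your Step~3 treats only refinement towers; it omits both (i) the fact that two stratifications defining the same set give the same value, which requires the Chebotarev-type density statement (Lemma~\ref{lem:chebo}) to see that a coloring is determined by the set it defines, and (ii) invariance under definable bijections, which in the paper is the longest part of the proof (reduction to the two-base situation of Lemma~\ref{lem:bij} and the identity $\alpha_{C'}=\Ind_G^{G'}\alpha_C$). Moreover, the ``motivic averaging identity for the $N$-action'' you invoke is not something one can verify without already having a candidate for the averaged motive; producing it is exactly what the equivariant machinery of \cite{BRNA} supplies. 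Your uniqueness sketch for $\gal=\Zhut$ is in the right spirit (bijection invariance is then free, since one argues inside $\KT$), but it must be run as the normalizer-passage plus inclusion--exclusion induction on $|G|$ and $|\gen{g_i}|$ described above, not via the direct identification of Step~2.
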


As already mentioned, our condition $(*)$ needed for uniqueness in the
pseudo-finite case is weaker than the one of
Denef-Loeser (Theorem~6.4.1 of \cite{DL2}).

If $\gal \ne \Zhut$, we can not prove that condition
$(*)$ is strong enough to define $\mmu$ uniquely,
and we do not have any good replacement for $(*)$.
Nevertheless, we will sometimes speak of \emph{the} map
$\mmu\colon\KT \to \KmotQ$ and mean the one defined
in Section~\ref{subsect:proof} (after Lemma~\ref{lem:mmu}).

\BLAU Kriegt man Eindeutigkeit im folgenden Sinne? $\mmu$ ist die
einzige Abbildung, so dass alle anderen Abbildungen $\KT \to \KmotQ$
über $\mmu$ faktorisieren.

The map $\mmu$ does not really depend on the base field $k$: if we have a second
field $k'$ containing $k$, then there are canonical ring homomorphisms
$\KT \to \KT[\gal,k']$ and $\KmotQ \to \KmotQ[k']$, which we will both denote
by $\otimes_k k'$. The map $\mmu$ is compatible with
these homomorphisms:

\begin{prop}\label{prop:change-k}
In the setting just described we have, for any definable set $X$
of $T_{\gal,k}$, $\mmu(X \otimes_k k') = \mmu(X)\otimes_k k'$.
\end{prop}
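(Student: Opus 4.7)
The plan is to verify that both $X \mapsto \mmu(X \otimes_k k')$ and $X \mapsto \mmu(X) \otimes_k k'$ are ring homomorphisms $\KT \to \KmotQ[k']$, and that they agree on a natural set of generators, namely the classes $X(\VGW, \me{1})$ for Galois covers $\VGW$ whose group $G$ has all prime factors in $P$.

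First, I would record three base-change compatibilities of the ingredients entering $(*)$: (i)~if $\VGW$ is a Galois cover of varieties over $k$, then its base change to $k'$ is again a Galois cover with the same group $G$; (ii)~the set $X(\VGW, \me{1})$ is cut out by a first-order formula with parameters in $k$, hence $X(\VGW, \me{1}) \otimes_k k'$ equals the corresponding set built from the base-changed cover; and (iii)~the classical motivic map $\mmu\colon \Kvar \to \Kmot$ is known to satisfy $\mmu(V \otimes_k k') = \mmu(V) \otimes_k k'$. Combining (i)--(iii), both $X \mapsto \mmu(X) \otimes_k k'$ and $X \mapsto \mmu(X \otimes_k k')$ satisfy property~$(*)$ when viewed as maps $\KT \to \KmotQ[k']$.

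In the case $\gal = \Zhut$, this immediately concludes the proof via the uniqueness clause of Theorem~\ref{thm:main}. For general $\gal$, uniqueness is not available, and I would instead revisit the explicit construction of $\mmu$ given in Section~\ref{subsect:proof} and verify that each step only involves operations commuting with $\otimes_k k'$: decomposing a definable set through a suitable Galois cover, averaging over the conjugacy-class data of $G$, and passing to the rationalization $\Kmot \otimes_{\bbZ} \bbQ$. The main obstacle will be that the construction may involve auxiliary choices of a Galois cover trivializing a given formula; I would need to check that transporting such a choice along $\otimes_k k'$ yields the same value, which ultimately reduces to the well-definedness of the construction together with the compatibilities (i)--(iii) already established.
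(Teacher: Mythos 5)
Your step (i) is false, and it is exactly the point where the content of the proposition lies. Base change to $k'$ need not preserve the integrality of $V$ (nor of $W$): already for $W=\spec k$ and $V=\spec k''$, where $k''\subset k'$ is a finite cyclic extension of $k$, one has a Galois cover with group $G=\Gal(k''/k)$ whose base change $V\otimes_k k'$ splits into $[k'':k]$ components. In general one must replace $V\otimes_k k'$ by one of its irreducible components $V'$, and one then obtains a Galois cover $\VGW[']$ over $k'$ whose group $G'$ is only a subgroup of $G$, with $X(\VGW,C)\otimes_k k' = X(\VGW['],C')$ where $C'=\me{Q\in C\mid Q\subset G'}$. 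Consequently your verification of $(*)$ for the map $X\mapsto\mmu(X\otimes_k k')$ --- the input you need for the uniqueness route when $\gal=\Zhut$ --- does not go through as written (the cover computing the left-hand side after base change is no longer a cover with group $G$), and your plan for general $\gal$, ``check that every step of the construction commutes with $\otimes_k k'$'', merely restates the problem: the one step that does not obviously commute is precisely this one. (Also, calling the classes $X(\VGW,\me{1})$ ``generators'' is only justified for $\gal=\Zhut$, and even there it requires the induction of Section~\ref{subsect:unique:proof}, not just quantifier elimination.)

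What is actually needed --- and is the argument the paper points to in \cite{DL} (paragraph before Lemma~3.4.1) and \cite{Nic:mot}, Proposition~8.9 --- is a direct computation on the generators $X(\VGW,C)$ for arbitrary colorings $C$: one checks $\alpha_{C'}=\Res_{G'}^{G}\alpha_{C}$ (using $C'=\me{Q\in C\mid Q\subset G'}$ and $\Ppart(\gen{g'})\subset G'$ for $g'\in G'$), and then uses properties of the equivariant motive of \cite{BRNA} not on your list: its behaviour on induced $G$-varieties (as a $G$-variety, $V\otimes_k k'$ is induced from the $G'$-variety $V'$) and its compatibility with base change of the ground field, giving $\mmu(G'\acts V',\alpha_{C'})=\mmu(G\acts V\otimes_k k',\alpha_{C})=\mmu(G\acts V,\alpha_{C})\otimes_k k'$. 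Two smaller caveats about your uniqueness route for $\gal=\Zhut$: the uniqueness clause of Theorem~\ref{thm:main} concerns homomorphisms $\KT\to\KmotQ$ extending $\mmu\colon\Kvar\to\Kmot$, whereas your two maps take values in $\KmotQ[k']$ and extend $V\mapsto\mmu(V)\otimes_k k'$, so you must at least remark that the proof of uniqueness applies verbatim in this modified setting; and even then the whole approach stands or falls with the verification of $(*)$ after base change, which brings you back to the analysis above.
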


We will not write down the proof of this, as it is exactly the same
as in the pseudo-finite case; see \cite{DL}, the paragraph before
Lemma~3.4.1, or \cite{Nic:mot}, Proposition~8.9.

\BLAU
The claim of the proposition is
$\mmu(X \otimes_k k') = \mmu(X)\otimes_k k'$ for any definable set
$X$ of $T_{\gal,k}$. It is enough to check the statement for generators
of $\KT$, i.e.\ we may suppose that $X$ is of the form
$X = X(\VGW, C)$ where $(\VGW, C)$ is a colored Galois cover defined over $k$.

\BLAU
By tensoring with $k'$ and choosing one irreducible component $V'$ of
$V \otimes_k k'$, we get a Galois cover $\VGW[']$ over $k'$, where
$G'$ is a subgroup of $G$. One easily verifies
$X\otimes_k k' = X(\VGW['], C')$ where
$C' := \me{Q \in  C \mid Q \subset G'}$.

\BLAU Das sieht so aus als waeren mit $k'$ ploetzlich weniger Mengen
defbar. Das ist in der Tat dann der Fall, wenn $k'$ fuer dieses spezielle
Galois-cover erzwingt, dass $V$ nicht irreduzibel ist.
Anders ausgedrueckt: Wir wollen $(X\otimes_k k')(K) = X(K)$, aber
nur fuer solche Modelle $K$, die $k'$ enthalten.

\BLAU
Using Lemma~\ref{lem:mmu:prop} we get
\[
\begin{aligned}
\mmu(X(\VGW['], C')) &= \mmu(G'\acts V', \alpha_{C'})
= \mmu(G'\acts V', \Res_{G'}^G\alpha_{C})\\
&= \mmu(G\acts V \otimes_k k', \alpha_{C})
= \mmu(G\acts V, \alpha_{C}) \otimes_k k'\\
&= \mmu(X(\VGW, C)) \otimes_k k'
\pu
\end{aligned}
\]
\BLAU Zum 2. $=$: $\alpha_{C'}(g') = 1 \iff \Ppart(\gen{g'}) \in C'
\iff \Ppart(\gen{g'}) \in C\dots$; letzteres da $\gen{g'} \subset G'$.

\BLAU Das vorletzte $=$ muss man wohl aus dem Beweis in [BRNA] ablesen.
Scheint aber kein Problem zu sein.


The next theorem is the one putting the Grothendieck rings of theories
corresponding to different Galois groups into relation.

\begin{thm}\label{thm:reduce}
Suppose $\gal_1$ and $\gal_2$ are two torsion-free pro-cyclic groups,
$\iota\colon \gal_2 \inject \gal_1$ is an injective map, and 
$k$ is any field (not necessarily of characteristic zero).
Denote the theories $T_{\gal_i,k}$ by $T_i$ for $i=1,2$.
Then the following defines a
ring homomorphism $\theta_\iota\colon \KT[2] \to \KT[1]$:
Suppose $K_1$ is a model of $T_1$. 
Then the fixed field $K_2 := \acl{K}_1^{\iota(\gal_2)}$
is a model of $T_2$ containing $K_1$. For any $X_2 \subset \bbA^n$
definable in $T_2$, we define
$\theta_\iota(X_2)(K_1) := X_2(K_2) \cap K_1^n$.
\end{thm}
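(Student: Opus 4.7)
I would first check that $K_2 := \acl{K}_1^{\iota(\gal_2)}$ is indeed a model of $T_2$ containing $K_1$. The inclusion $K_1 \subseteq K_2$ is immediate since $\iota(\gal_2) \subseteq \gal_1 = \Gal(\acl{K}_1/K_1)$ acts trivially on $K_1$. Because $\iota$ is a continuous homomorphism with compact source $\gal_2$, the image $\iota(\gal_2)$ is closed in $\gal_1$; the Galois correspondence then gives $\Gal(\acl{K}_1/K_2) = \iota(\gal_2) \cong \gal_2$, and in particular $\acl{K}_2 = \acl{K}_1$. The field $K_2$ is perfect because its absolute Galois group is torsion-free, and it is PAC since separable algebraic extensions of PAC fields are PAC (standard field arithmetic); it contains $k$ because $K_1$ does.

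\textbf{Definability (main step).}
The core task is to show that $\theta_\iota(X_2)$ is genuinely definable in $T_1$. For this I would exploit the description of $\KT[i]$ via colored Galois covers developed in Section~\ref{sect:gal}: every class is a $\bbZ$-linear combination of classes of the form $[X(V \overto{G} W, C)]$, where $V \overto{G} W$ is a Galois cover over $k$ and $C$ is a conjugation-invariant collection of (cyclic) subgroups of $G$. For a point $w \in W(K_1) \subseteq W(K_2)$, the $\gal_1$-action on the fiber $V_w(\acl{K}_1)$ is, up to conjugation, encoded by a continuous homomorphism $\rho_w\colon \gal_1 \to G$, and the $\gal_2$-action on the same fiber (now seen as $V_w(\acl{K}_2)$) is just $\rho_w \circ \iota$. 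The condition ``$w \in X(V \overto{G} W, C)(K_2)$'' therefore translates into a condition on $\rho_w$ alone: the color data extracted from $\rho_w \circ \iota$ lies in $C$. Writing $\iota^*(C)$ for the conjugation-invariant set of $T_1$-colors whose restriction along $\iota$ lies in $C$, one obtains
\[
\theta_\iota\bigl(X(V \overto{G} W, C)\bigr) = X(V \overto{G} W, \iota^*(C)),
\]
which is again a colored Galois cover over $k$, hence definable in $T_1$.

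\textbf{Ring structure and main obstacle.}
The set-theoretic recipe $X_2 \mapsto X_2(K_2) \cap K_1^n$ visibly commutes with disjoint unions and cartesian products, so $\theta_\iota$ is additive and multiplicative on representatives. To descend to $\KT[2]$, it suffices to check that the color-translation $C \mapsto \iota^*(C)$ respects the relations among colored Galois covers (those coming from isomorphisms of covers and from refinement along $V' \to V$); this is automatic, since $\iota^*$ is defined purely in terms of the group homomorphism $\iota$ and is thus functorial in $G$. The technical heart of the argument lies in the definability step: making $\iota^*$ precise and verifying that the colored-Galois-cover description matches the set-theoretic one. In particular, one must track the behavior of the $\Ppart$ construction under restriction along $\iota$, since the relevant color data involves the $P_i$-part of the subgroup generated by the image of the Frobenius.
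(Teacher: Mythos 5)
Your setup and the overall strategy (reduce to sets of the form $X(\VGW, C)$ via quantifier elimination and translate the coloring along $\iota$) match the paper, but the two points you declare ``automatic'' or defer are exactly the substance of the proof, and as written they are gaps. First, the definability step: you define $\iota^*(C)$ as ``the $T_1$-colors whose restriction along $\iota$ lies in $C$'', but a $T_1$-color is only the conjugacy class of the image $Q_1 = \im\rho_w$, and a subgroup of $G$ has no ``restriction along $\iota$''. The quantity that is restricted is the homomorphism $\rho_w$, and a priori two homomorphisms $\rho_1, \rho_1'\colon \gal_1 \to G$ with the same image could have $\rho_1(\iota(\gal_2)) \not\sim \rho_1'(\iota(\gal_2))$, so the condition ``$\rho_w(\iota(\gal_2)) \in C$'' need not be a function of the color at all. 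The paper defines $C_1 := \me{\im\rho_1 \mid \rho_1\colon \gal_1 \to G,\ \rho_1(\iota(\gal_2)) \in C}$ and then \emph{proves} the independence: since $\gal_1$ is pro-cyclic, any two homomorphisms of $\gal_1$ onto a cyclic group $Q_1$ differ by an automorphism of $Q_1$, and since all subgroups of a cyclic group are characteristic, $\rho_1(\iota(\gal_2))$ depends only on $\im\rho_1$. This is where the hypothesis that $\gal_1$ is pro-cyclic enters essentially; the paper notes (Section on open problems) that for $\gal_1 = \Zhut * \Zhut$ the analogous statement fails and $\theta_\iota(X(\VGW,C))$ need not be definable by the same cover, so this verification cannot be waved through.

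Second, well-definedness on $\KT[2]$: the relations in the Grothendieck ring of a theory are generated by \emph{arbitrary} definable bijections (together with disjoint unions and products), not only by isomorphisms and refinements of Galois covers, so ``functoriality of $\iota^*$ in $G$'' does not settle it -- there is no presentation of $\KT[2]$ by covers-with-refinement available here. What is needed, and what the paper proves separately, is that a $T_2$-definable bijection $f\colon X_2 \to X_2'$ induces a $T_1$-definable bijection $\theta_\iota(X_2) \to \theta_\iota(X_2')$; concretely, that $f$ maps points of $X_2(K_2)$ with coordinates in $K_1$ to points with coordinates in $K_1$. The paper's argument uses that $K_2/K_1$ is Galois (here pro-cyclicity, i.e.\ commutativity of $\gal_1$, again helps) and that $f$ commutes with every $\sigma \in \Gal(K_2/K_1)$: if $x \notin K_1^n$ some $\sigma$ moves $x$, and injectivity of $f$ on $X_2(K_2)$ then forces $\sigma(f(x)) = f(\sigma(x)) \ne f(x)$, so $f(x) \notin K_1^{n'}$. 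This step is absent from your proposal. (Minor: your reason for $K_2$ being perfect -- torsion-freeness of its Galois group -- is not valid in positive characteristic; use instead that an algebraic extension of a perfect field is perfect.)
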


\BLAU Es scheint zu gelten: Wenn der Kokern von $\iota$ torsionsfrei
ist, dann ist $\theta_\iota$ injektiv.

Using this theorem, one can reduce the 
existence of $\mmu$ for arbitrary torsion-free pro-cyclic groups $\gal$
to the case $\gal = \Zhut$ (which has been treated by Denef-Loeser):
apply Theorem~\ref{thm:reduce} to $\iota\colon \gal \inject \Zhut$,
where $\iota$ maps $\gal$ to the appropriate factor
$\prod_{p\in P}\bbZ_p$ of $\Zhut$ (such that $\Zhut/\!\gal$ is torsion-free).
Then define $\mmu$ as the composition $\zmmu\circ\theta_\iota$,
where $\zmmu\colon \KT[\Zhut,k] \to  \KmotQ$ is the known map in the
pseudo-finite case.
Verification of the properties of $\mmu$ is not very
difficult using the
explicit computations done in the proof of Theorem~\ref{thm:reduce}.

So in principle, we are done with the existence part of Theorem~\ref{thm:main}
(provided we can prove Theorem~\ref{thm:reduce}).
On the other hand, one has
the feeling that it should also be possible to construct
the map $\mmu$ directly for any group $\gal$. We will do this
in Section~\ref{subsect:proof}, but as our construction closely follows
the construction in \cite{Nic:mot}, we will
go into details only in places where our generalization requires
some modifications.


Another interesting application of Theorem~\ref{thm:reduce}
is the case $\gal_1 = \gal_2 = \gal$,
but with a non-trivial injection $\iota\colon \gal \inject \gal$.
One thus gets endomorphisms of the ring $\KT$,
which might reveal a lot of information about its structure. Indeed using
such endomorphisms we will construct a
whole family of pairs of definable sets $X_1$ and $X_2$
such that $\mmu(X_1) = \mmu(X_2)$ but $\mmu(\theta(X_1)) \ne \mmu(\theta(X_2))$,
thereby proving:
\begin{prop}\label{prop:example}
Let $k$ be a field of characteristic zero and let $\gal$ be a
non-trivial torsion-free pro-cyclic group.
Then the map $\mmu\colon \KT \to \KmotQ$ is not injective.
\end{prop}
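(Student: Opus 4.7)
The plan is to use Theorem~\ref{thm:reduce} to produce a ring endomorphism of $\KT$ that fails to commute with $\mmu$ on a concrete element, and extract non-injectivity from this.

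Since $\gal = \prod_{p\in P}\bbZ_p$ with $P$ non-empty, pick some $p \in P$. The multiplication-by-$p$ map $\iota\colon \gal \to \gal$ is injective (as $\gal$ is torsion-free) and has non-trivial cokernel $\bbZ/p\bbZ$. Applying Theorem~\ref{thm:reduce} with $\gal_1 = \gal_2 = \gal$ yields a ring endomorphism $\theta := \theta_\iota$ of $\KT$. Observe that if $\mmu$ were injective, any equality $\mmu(X_1) = \mmu(X_2)$ in $\KmotQ$ would force $X_1 = X_2$ in $\KT$ and hence $\mmu(\theta(X_1)) = \mmu(\theta(X_2))$; so it is enough to produce a single pair $X_1, X_2 \in \KT$ violating this.

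The example uses a $\bbZ/p\bbZ$-Galois cover $\VGW$ over $k$ with $\mmu(V) \ne p\cdot\mmu(W)$ in $\KmotQ$. The standard choice is the Kummer cover $\bbG_m \to \bbG_m$, $y \mapsto y^p$, with $V = W = \bbG_m$; this is directly available when $\zeta_p \in k$ (in particular, always for $p = 2$), and one has $\mmu(V) - p\cdot\mmu(W) = (1-p)\mmu(\bbG_m) \ne 0$ in $\KmotQ$. Two observations about $\theta$ drive the calculation. First, for any variety $V$ over $k$ one has $\theta(V) = V$ in $\KT$: by Theorem~\ref{thm:reduce}, $\theta(V)(K_1) = V(K_2) \cap K_1^n$, which equals $V(K_1)$ since $V$ is cut out by polynomials. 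Second, $\theta(X(\VGW, \me{1})) = W$: for $w \in W(K_1)$, the $\bbZ/p\bbZ$-torsor $V_w$ corresponds to a homomorphism $\phi\colon \gal \to \bbZ/p\bbZ$, and the restriction of $\phi$ to the Galois group of $\acl{K_1}/K_2$, namely $\iota(\gal) = p\gal$, vanishes because $\phi(pg) = p\phi(g) = 0$ in $\bbZ/p\bbZ$; hence each fiber $V_w$ acquires a $K_2$-point.

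Now take $X_1 := p \cdot X(\VGW, \me{1})$ and $X_2 := V$ in $\KT$. By property $(*)$ of $\mmu$,
\[
\mmu(X_1) \;=\; p \cdot \tfrac{1}{p}\,\mmu(V) \;=\; \mmu(V) \;=\; \mmu(X_2),
\]
while by the two observations above,
\[
\mmu(\theta(X_1)) \;=\; p \cdot \mmu(W) \;\ne\; \mmu(V) \;=\; \mmu(\theta(X_2)).
\]
This contradicts injectivity of $\mmu$. The main technical step I expect to carry out in detail is the identification $\theta(X(\VGW, \me{1})) = W$; it rests on extracting the explicit action of $\theta$ on Galois-cover generators from the proof of Theorem~\ref{thm:reduce}, that is, on the fact that the Frobenius of a $G$-torsor transforms by multiplication by the extension degree when the base field is enlarged from $K_1$ to $K_2$.
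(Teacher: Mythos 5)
Your overall strategy is exactly the paper's: the paper applies Theorem~\ref{thm:reduce} to $\iota\colon\sigma\mapsto\sigma^n$, obtains the endomorphism $\theta_n$ of $\KT$ with $\theta_n(X(\VGW,C_2))=X(\VGW,C_1)$, $C_1=\me{Q\in\Psub(G)\mid Q^n\in C_2}$, and compares $X(\VGW,\me{1})\times G$ with $V$ for $n=|G|$; your identities $\theta(V)=V$ and $\theta(X(\VGW,\me{1}))=W$ (for $G=\zyk{p}$, $n=p$) are correct and are special cases of that computation, and the logic ``$\mmu$ injective $\Rightarrow$ $\mmu\circ\theta$ cannot separate sets with equal $\mmu$'' is sound.

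The genuine gap is in the construction of the cover. You only produce a suitable $\zyk{p}$-cover when $\zeta_p\in k$ (the Kummer cover), and your fallback ``in particular, always for $p=2$'' is not available: $p$ is forced to lie in $P$, so for instance $k=\bbQ$ and $\gal=\bbZ_5$ is not handled by your argument as written, although the Proposition claims the result for every characteristic-zero $k$ and every non-trivial torsion-free pro-cyclic $\gal$. Moreover, the hypothesis you impose on the cover, $\mmu(V)\ne p\,\mmu(W)$, is in fact unnecessary: the paper takes \emph{any} non-trivial Galois cover $\VGW$ with $|G|$ supported in $P$ (this existence is all it needs) and concludes directly, because $V$ is irreducible (it is integral by the definition of a Galois cover) whereas $W\times G$ has $|G|\ge 2$ irreducible components of the same maximal dimension, and the number of top-dimensional components is detected by the motive; hence $\mmu(V)\ne |G|\,\mmu(W)$ automatically. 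If you replace your Kummer-specific verification $(1-p)\mmu(\Gm)\ne 0$ by this component-counting argument, your proof goes through verbatim for an arbitrary such cover and yields the full statement.
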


The remainder of the article is organized as follows.
In Section~\ref{sect:gal}, we state the main tool to
get hold of arbitrary definable sets, namely
quantifier elimination to Galois formulas. Before that, we introduce
the necessary notation: Galois covers, a generalized Artin symbol
and Galois stratifications.
In Section~\ref{sect:proof}, we prove Theorem~\ref{thm:main}.
Section~\ref{sect:reduce} is devoted to the maps $\theta_{\iota}$:
we prove Theorem~\ref{thm:reduce}
and Proposition~\ref{prop:example},
and moreover, we check that
the maps $\mmu$ of Theorem~\ref{thm:main} for different Galois groups
are compatible with suitable maps $\theta_\iota$
(Proposition~\ref{prop:coincide}).
Finally Section~\ref{sect:open} lists some open problems.

\section{Galois stratifications and quantifier elimination}
\label{sect:gal}

A standard technique to get hold of definable sets of
perfect PAC fields with not-too-large
Galois group is the quantifier elimination to Galois formulas.
In this section, we define the necessary objects and then, 
in Section~\ref{subsect:qe}, state this quantifier
elimination result in the version of
Fried-Jarden \cite{FJ}.



\subsection{Galois covers}

\begin{defn}
\begin{enumerate}
\item
A \emph{Galois cover} consists of two
integral and normal varieties $V$ and $W$ (over some fixed field $k$)
and a finite \'etale map $f\colon V \to W$
such that for $G := \Aut_W(V)\opp$,
we have canonically $W \cong V/G$ (where $G$ acts from the right on $V$).
We denote a Galois cover by $f\colon \VGW$ and call $G$ the \emph{group}
of that cover. The action of $G$ on $V$ will be denoted by $v.g$ (for $v \in V$,
$g \in G$).
\item
We say that a Galois cover $f'\colon V'\overto{G'}W$ is a \emph{refinement} of
$f\colon \VGW$, if there is a finite \'{e}tale map
$g\colon V' \surject V$ such that $f' = f \circ g$.
\item
If $W''$ is a locally closed subset of $W$ and $V''$ is a connected component
of $f^{-1}(W'')$, then we call
$\VGW['']$ the \emph{restriction} of $\VGW$ to $W''$,
where $G'' := \Aut_{W''}(V'')\opp$.
\end{enumerate}
\end{defn}

\begin{rem}
\begin{enumerate}
\item
If $f'\colon V'\overto{G'}W$ is a refinement of $f\colon \VGW$,
then we have a canonical surjection $\pi\colon G' \surject G$.
\item
If $\VGW['']$ is a restriction of $\VGW$, then we have a canonical
injection $G'' \inject G$.
Different choices of the connected component of $f^{-1}(W'')$
yield isomorphic restricted Galois covers.
\end{enumerate}
\end{rem}

\BLAU $G''$ ist UG von $G$: Beide operieren einfach transitiv auf einer
Faser ueber einem Punkt in $W''$...

\subsection{Artin symbols and colorings}

Using a Galois cover $\VGW$, we would like to decompose $W$ into
subsets according to the Artin symbol of the elements. However, the usual
definition of Artin symbol needs a canonical generator of the Galois group
$\gal$ (usually the Frobenius of a finite field); the Artin symbol is then the
image of the generator under a certain map $\rho\colon \gal \to G$
(which is unique only up to conjugation by $G$).
If one does not have such a
canonical generator, then one still can consider the image of $\rho$.
This is what one uses as Artin symbol in our case (see \cite{FJ}).

\begin{defn}[and Lemma]
Suppose $f\colon \VGW$ is a Galois cover over $k$ and $K$ is a field
containing $k$.
\begin{enumerate}
\item
Suppose $v \in V(\aclK)$ such that $f(v) \in W(K)$.
Then there is a unique group
homomorphism $\rho\colon\GalK \to G$ satisfying
$\sigma(v) = v.\rho(\sigma)$ for any $\sigma \in \GalK$.
The \emph{decomposition group} $\Fr(v) := \im \rho \subset G$ of $v$
is the image of that homomorphism.
\item
For $w \in W(K)$, let the \emph{Artin Symbol} $\Ar(w)$ of $w$
be the set $\me{\Fr(v) \mid v \in V(\aclK), f(v) = w}$ of decomposition
groups of all preimages of $w$.
\end{enumerate}
\end{defn}

\BLAU
Die ``richtige'' Art zu schreiben, dass $v$ ein Punkt mit Bild in
$W(K)$ ist, ist: $w\colon \spec K \to W$ ist ein Punkt, und
$v$ ist eine Zshgskomp. von $K \times_W V$. Vgl. \cite{Nic:mot}.

$\Ar(w)$ consists exactly of one conjugacy class of
subgroups of $G$,
and these subgroups are isomorphic to a quotient of the absolute
Galois group $\GalK$ of the field.

If $K$ is a model of our theory $T$, then
the quotients of $\GalK = \gal$ are just the cyclic groups $Q$ such 
that all prime factors of $|Q|$ lie in $P$
(where $P$ is the set of primes such that $\gal = \prod_{p\in P}\bbZ_p$).
We introduce some notation for this:

\begin{defn}\label{defn:Ppart}Given a finite group $G$, we will call those subgroups of $G$ which
are isomorphic to a quotient of $\gal$ the \emph{permitted subgroups}.
We denote the set of all permitted subgroups of $G$ by
$\Psub(G)$. If $Q$ is a finite cyclic group, then we denote by $\Ppart(Q)$
the ``permitted part of $Q$'', i.e.\ the biggest permitted subgroup of $Q$.
\end{defn}

\BLAU
Diese Def. vom Artin-Symbol ist vermutlich nicht die Richtige: Will:
Menge von Homomorphismen $\gal \to G$ modulo $G$-Konj und modulo Automorphismen
von $\gal$. Statt modulo Automorphismen von $\gal$ haben wir aber hier
modulo Automorphismen vom Bild vom Homomorphismus; das koennten mehr sein.
Sieht ein bisschen so aus, als koennte der Fehler der Grund dafuer sein,
dass [FJ] die Bedingung ``embedding property'' an $\gal$ stellt.

The interest
of $\Ppart(Q)$ is the following. We will sometimes identify
$\gal = \prod_{p\in P}\bbZ_p $ with the
corresponding factor of $\Zhut$ and consider homomorphisms
$\rho\colon \Zhut \to G$. Then the image of $\gal$ in $G$
is just $\rho(\gal) = \Ppart(\im\rho)$.

Given a Galois cover $\VGW$, we now define subsets of $W$ using the Artin
symbol:

\begin{defn}
\begin{enumerate}
\item
A \emph{coloring} of a Galois cover $\VGW$ is a subset $C$ of the
permitted subgroups of
$G$ which is closed under conjugation. A Galois cover together
with a coloring is called a \emph{colored Galois cover}.
\item
Given
a colored Galois cover $(\VGW, C)$ and a model $K\models T$,
we define the set
$X(\VGW, C)(K) := \me{w \in W(K) \mid \Ar(w) \subset C}$.
\end{enumerate}
\end{defn}

Note that $X(\VGW, C)$ is definable, i.e.\ there is a formula $\phi$
such that for any model $K\models T$ we have $\phi(K) = X(\VGW, C)(K)$.

\BLAU
Achtung: Kann \emph{nicht} (uniform fuer alle $K$) die Erweiterung vom
Grad $n$ interpretieren. (Keine Gruppoid-imag-elim.)
Kann aber uniform sagen: ``Fuer alle Erweiterungen
vom Grad $n$ gilt: \dots''

\begin{rem}\label{rem:refres}
\begin{enumerate}
\item
If $(\VGW, C)$ is a colored Galois cover and $V'\overto{G'}W$
is a refinement with canonical map $\pi\colon G' \surject G$,
then we can also refine the coloring: by setting
$C' := \me{Q \in \Psub(G')\mid \pi(Q) \in C}$, 
we get $X(V'\overto{G'}W, C') = X(\VGW, C)$.
\item
Similarly if $\VGW['']$ is a restriction of $f\colon\VGW$:
in that case,
set $C'' := \me{Q \in C \mid Q \subset G''}$.
Then we get
$X(\VGW[''], C'') = X(\VGW, C) \cap W''$.
\end{enumerate}
\end{rem}

\subsection{Galois stratifications}

\begin{defn}
A \emph{Galois stratification} $\calA$ of a variety $W$ is a finite family 
$(f_i\colon V_i \overto{G_i} W_i, C_i)_{i\in I}$ of colored Galois covers
where the $W_i$ form a partition of $W$ into locally closed sub-varieties.
We shall say that $\calA$ \emph{defines} the following subset $\calA(K) \subset
W(K)$, where $K\models T$ is a model:
\[
\calA(K) := \bigcup_{i\in I}X(\VGW[_i], C_i)(K)
\]
\end{defn}
The data of a Galois stratification denoted by $\calA$ will always
be denoted by $V_i$, $W_i$, $G_i$, $C_i$, and analogously with primes for
$\calA'$, $\calA''$, etc. This will not always be explicitely mentioned.

\begin{defn}
Suppose $\calA$ and $\calA'$ are two Galois stratifications.
We say that $\calA'$ is a \emph{refinement} of $\calA$, if:
\begin{itemize}
\item
Each $W_i$ is a union $\bigcup_{j\in J_{i}} W'_j$ for some $J_i\subset I'$.
\item
For each $i \in I$ and each $j \in J_i$, the Galois cover
$\VGW['_j]$ is a refinement of the restriction of the Galois cover $\VGW[_i]$
to the set $W'_j$.
\item
$C'_j$ is constructed out of $C_{i}$ as described in Remark~\ref{rem:refres},
such that $X(\VGW['_j], C'_{j}) = X(\VGW[_{i}], C_{i}) \cap W'_{j}$.
\end{itemize}
\end{defn}

By the third condition, $\calA$ and $\calA'$ define the same set.

One reason for Galois stratifications being handy to
use is the following general lemma:

\begin{lem}\label{lem:refine}
If $\calA$ and $\calA'$ are two Galois stratifications, then there exist
refinements $\calAfine$ and $\calAfine'$ of $\calA$ resp.\ $\calA'$
which differ only in the colorings.
\end{lem}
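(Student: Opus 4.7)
The plan is first to find a common refinement of the two partitions of $W$ into integral normal locally closed pieces, and then, piece by piece, to construct a single Galois cover refining the restrictions of both original covers; the colorings are transported afterwards via Remark~\ref{rem:refres}.

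\emph{Step 1: the partition.} The intersections $W_i \cap W'_j$ for $(i,j) \in I \times I'$ already form a finite partition of $W$ into locally closed subvarieties. To make the pieces integral and normal (as required for serving as the base of a Galois cover), I would, inside each $W_i \cap W'_j$, pass to irreducible components and then, on each such component, split off the open dense normal locus from its closed complement, which has strictly smaller dimension, and iterate. This yields a finite partition $W = \bigdcup_{s \in S} W''_s$ into integral normal locally closed subvarieties, each $W''_s$ contained in a unique $W_{i(s)} \cap W'_{j(s)}$.

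\emph{Step 2: a common refinement of the covers.} Fix $s \in S$. The restrictions of $\VGW[_{i(s)}]$ and $\VGW['_{j(s)}]$ to $W''_s$ are two finite \'etale Galois covers of the integral normal variety $W''_s$. Via the \'etale fundamental group $\pi$ of $W''_s$ they correspond to open normal subgroups $N, N'$ of $\pi$; the intersection $N \cap N'$ is again open and normal, and gives a Galois cover $V''_s \overto{G''_s} W''_s$ refining both restricted covers. Concretely, $V''_s$ can be taken as a connected component of the fibre product of the two restricted covers; this fibre product is \'etale over the normal base $W''_s$, hence normal, and any connected component of a normal Noetherian scheme is integral.

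\emph{Step 3: assembly.} Applying Remark~\ref{rem:refres}, first in its restriction form and then in its refinement form, turns $C_{i(s)}$ into a coloring $C''_s$ of $V''_s \overto{G''_s} W''_s$ satisfying $X(V''_s \overto{G''_s} W''_s, C''_s) = X(\VGW[_{i(s)}], C_{i(s)}) \cap W''_s$, and analogously transforms $C'_{j(s)}$ into $C'''_s$. Setting $\calAfine := ((V''_s \overto{G''_s} W''_s), C''_s)_{s \in S}$ and $\calAfine' := ((V''_s \overto{G''_s} W''_s), C'''_s)_{s \in S}$ gives two Galois stratifications refining $\calA$ and $\calA'$ respectively that, by construction, differ only in their colorings. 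The one delicate point is Step~1, where finiteness together with integrality and normality of the refined pieces have to be arranged simultaneously; once that is available, Step~2 is essentially formal and Step~3 is a direct application of Remark~\ref{rem:refres}.
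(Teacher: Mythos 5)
Your proof is correct; the paper states Lemma~\ref{lem:refine} without proof, treating it as part of the standard Galois-stratification machinery of Fried--Jarden and Denef--Loeser. Your argument --- refine the common partition by the pieces $W_i\cap W'_j$ into integral normal locally closed subvarieties by Noetherian induction (note the normal locus automatically avoids intersections of irreducible components, so the iteration does produce a genuine partition), then over each piece take a connected component of the fibre product of the two restricted covers, which is Galois because both corresponding subgroups of the \'etale fundamental group are open and normal so every component has group $\pi/(N\cap N')$, and finally transport the colorings via Remark~\ref{rem:refres} --- is exactly the intended standard argument.
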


\subsection{Quantifier elimination to Galois stratifications}
\label{subsect:qe}

We now state the version of quantifier elimination which we will
use. It is given in \cite{FJ}, Proposition~30.5.2.
Applied to our situation, that proposition reads:
\begin{lem}\label{lem:qe}Suppose $\gal$ is a torsion-free pro-cyclic group
and $k$ is any field. Then
each definable set $X$ of $T_{\gal,k}$ is already definable by a Galois
stratification $\calA$ (over $k$),
i.e.\ for any $K\models T_{\gal,k}$, we have $X(K) = \calA(K)$.
\end{lem}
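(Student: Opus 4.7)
The plan is straightforward: this lemma is essentially a restatement of Proposition~30.5.2 of \cite{FJ} in the notation set up above, so the proof reduces to verifying that the hypotheses of that proposition apply to our setting and to matching the two notions of Galois stratification.

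First I would check the group-theoretic hypothesis. The Fried--Jarden quantifier elimination applies to perfect PAC fields whose absolute Galois group satisfies a suitable ``embedding property'' ensuring that the theory is elementary and admits a Galois formula normal form. For a torsion-free pro-cyclic group $\gal = \prod_{p \in P} \bbZ_p$ this is immediate: every finite continuous quotient is cyclic, each factor $\bbZ_p$ is projective, and the liftings required by the embedding property exist because diagrams of finite cyclic groups admit the necessary sections. No hypothesis on $\charac k$ enters here, in agreement with the statement of the lemma.

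Second, I would translate the statement. A Fried--Jarden Galois stratification encodes precisely the same combinatorial data as ours — a partition of the ambient variety into locally closed strata $W_i$, each equipped with a Galois cover $\VGW[_i]$ and a conjugation-closed family of subgroups of $G_i$ — and the set defined is cut out by the same pointwise condition on the Artin symbol. The only cosmetic difference is our restriction to \emph{permitted} subgroups, but on any model $K \models T_{\gal,k}$ every decomposition group $\Fr(v)$ is automatically a continuous quotient of $\gal = \GalK$, hence lies in $\Psub(G_i)$; thus dropping the non-permitted subgroups from any coloring leaves $X(\VGW[_i], C_i)(K)$ unchanged.

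Finally, I would apply the proposition. Given a definable $X \subset \bbA^n$, Fried--Jarden produce a Galois stratification $\calA$ of $\bbA^n$ with $\calA(K) = X(K)$ for every $K \models T_{\gal,k}$. For a definable subset of an arbitrary variety $W$, decompose $W$ into affine embedded pieces as in Section~\ref{subsect:results}, apply the previous case to each piece, and combine the resulting stratifications (intersecting their strata with the respective affine pieces). The main obstacle is purely notational: no new ideas beyond those already present in \cite{FJ} are required, but one must be careful in matching their conventions on Artin symbols and colorings to ours.
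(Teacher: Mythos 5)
Your proposal is correct and follows essentially the same route as the paper, which likewise treats the lemma as an instance of Proposition~30.5.2 of \cite{FJ} and merely notes that models of $T_{\gal,k}$ are perfect Frobenius fields (i.e.\ that the torsion-free pro-cyclic group $\gal$ has the required embedding property). Your additional remarks on matching colorings to permitted subgroups and reducing to affine pieces are exactly the routine translations the paper leaves implicit.
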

Note that Proposition~30.5.2 of \cite{FJ} requires that $K$
is what Fried-Jarden call a ``prefect Frobenius field'';
this is indeed the case for any model of $T_{\gal,k}$.

\section{Proof of Theorem~\ref{thm:main}}
\label{sect:proof}

In this section we prove Theorem~\ref{thm:main}
without using Theorem~\ref{thm:reduce}: we construct
the map $\mmu\colon \KT \to \KmotQ$, check its properties, and
prove uniqueness in the case $\gal = \Zhut$.
For the whole section, we fix a
torsion-free pro-cyclic group $\gal$
and a field $k$ of characteristic zero. We also fix
the theory $T := T_{\gal,k}$ we will be working in.

\subsection{Some preliminary lemmas}
\label{subsect:lem}

We will need the following basic property of the generalized Artin symbol.

\begin{lem}\label{lem:bij}
Suppose we have the following commutative diagram of varieties over $k$,
where the maps $f_1\colon V \to W_1$ and
$f_2\colon V \to W_2$ are Galois covers with groups $G_1$ and $G_2$,
respectively.
We have naturally $G_1 \subset G_2$.
\[
\begin{array}{c@{\qqquad}c}
\Rnode{V}{V}\\[4ex]
\Rnode{W1}{W_1} & \Rnode{W2}{W_2}
\end{array}
\ncline{->>}{V}{W1}\Bput{f_1}
\ncline{->>}{V}{W2}\Aput{f_2}
\ncline{->>}{W1}{W2}\Bput{\phi}
\]
Suppose additionally that
$C_1$ is a conjugacy class of subgroups of $G_1$ and
$C_2 := C_1^{G_2}$ is the induced conjugacy class of subgroups of $G_2$.
Then for any field $K \supset k$,
the image under $\phi$ of $X_1(K) := \me{w_1 \in W_1(K) \mid \Ar(w_1) = C_1}$
is $X_2(K) := \me{w_2 \in W_2(K) \mid \Ar(w_2) = C_2}$.
Moreover, the size of the fibers of the induced map
$X_1(K) \to X_2(K)$
is $\frac{|G_2|\cdot|C_1|}{|C_2|\cdot|G_1|}$.
\end{lem}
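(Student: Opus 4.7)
The plan is to unpack both Artin symbols in terms of a single choice of preimage in $V(\aclK)$ and then translate the statement into a counting problem on $G_2$. The key observation is that for any $v \in V(\aclK)$, the homomorphism $\rho\colon \GalK \to G_2$ defined by $\sigma(v) = v.\rho(\sigma)$ satisfies $\im \rho \subset G_1$ if and only if $f_1(v) \in W_1(K)$, and in that case the same $\rho$ also computes the decomposition group of $v$ with respect to $f_1$. Writing $\Fr(v)$ for this unambiguous subgroup of $G_2$, one therefore has $\Ar(f_i(v)) = \Fr(v)^{G_i}$ whenever $f_i(v) \in W_i(K)$.

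Granting this, the image claim is straightforward. Given $w_1 \in X_1(K)$ with any $f_1$-preimage $v$, one has $\Fr(v) \in C_1 \subset C_2$ (using $C_2 = C_1^{G_2}$), so $\Ar(\phi(w_1)) = \Fr(v)^{G_2} = C_2$, i.e.\ $\phi(w_1) \in X_2(K)$. Conversely, for $w_2 \in X_2(K)$ pick any $v \in V(\aclK)$ with $f_2(v) = w_2$; since $\Fr(v) \in C_2 = C_1^{G_2}$, some $g \in G_2$ satisfies $g^{-1}\Fr(v)g \in C_1$, and replacing $v$ by $v.g$ we may assume $\Fr(v) \in C_1 \subset G_1$. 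Then $w_1 := f_1(v) \in W_1(K)$ by the key observation, $\phi(w_1) = w_2$, and $\Ar(w_1) = \Fr(v)^{G_1} = C_1$.

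For the fiber count over a fixed $w_2 \in X_2(K)$, introduce the set $S := \{v \in V(\aclK) : f_2(v) = w_2,\ \Fr(v) \in C_1\}$. Since $C_1$ is closed under $G_1$-conjugation and each $f_1$-fiber over a $K$-point of $W_1$ has exactly $|G_1|$ elements, the map $v \mapsto f_1(v)$ sends $S$ onto $\phi^{-1}(w_2) \cap X_1(K)$ with every fiber of size $|G_1|$; hence the fiber we are after has size $|S|/|G_1|$. To compute $|S|$, fix $v_0 \in f_2^{-1}(w_2)$ and put $H := \Fr(v_0)$. Since $V \to W_2$ is \'etale of degree $|G_2|$, the set $f_2^{-1}(w_2)$ is a free $G_2$-torsor with $\Fr(v_0.g) = g^{-1}Hg$, so for each $H' \in C_1$ the set of $g$ with $g^{-1}Hg = H'$ is a coset of $N_{G_2}(H)$. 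By orbit--stabilizer $|N_{G_2}(H)| = |G_2|/|C_2|$, whence $|S| = |C_1|\cdot|G_2|/|C_2|$ and the desired fiber size equals $|G_2||C_1|/(|G_1||C_2|)$.

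The only genuine delicacy is the dual role of $G_1$---as a subgroup of $G_2$ governing the quotient map $\phi$, and as the Galois group of its own cover---which makes the key observation linking ``$f_1(v) \in W_1(K)$'' to ``$\Fr(v) \subset G_1$'' the single substantive point; the remainder is bookkeeping with conjugations in $G_2$.
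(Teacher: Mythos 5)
Your proof is correct and follows essentially the same route as the paper: the same argument that $\Fr(v)\subset G_1$ forces $f_1(v)\in W_1(K)$ for both inclusions of the image claim, and the same orbit--stabilizer count on the $G_2$-torsor $f_2^{-1}(w_2)$ for the fiber size. The only cosmetic difference is that you count all of $\{v\mid \Fr(v)\in C_1\}$ at once and divide by $|G_1|$, whereas the paper fixes a single $Q\in C_1$ and partitions the resulting set of preimages; the two computations are equivalent.
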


\BLAU{
\begin{proof}
Fix a field $K$.

``$\phi(X_1)\subset X_2$'':
Suppose $w_1 \in X_1(K)$ and
$v \in f_{1}^{-1}(w_{1})$ is a preimage in $V(\aclK)$.
Then $\Fr(v) \in C_1$. But $v$ is also a preimage of $\phi(w_1)$,
so $\Ar(\phi(w_1))$ contains $\Fr(v)$ and is therefore equal to $C_1^{G_2}$.

``$\phi(X_1)\supset X_2$'':
Suppose $w_2 \in X_2(K)$. As $\Ar(w_2)$ contains
$C_1$, we may choose a preimage $v \in V(\aclK)$ of $w_2$ with $\Fr(v) \in C_1$.
In particular, $\Fr(v) \subset G_1$, which means that $\GalK$
fixes $v.G_1 \in (V/G_1)(K) \cong W_1(K)$. So the image $f_1(v)$
lies in $W_1(K)$. As $\Ar(f_1(v))$ contains
$\Fr(v)$, we have $\Ar(f_1(v)) = C_1$, so $f_{1}(v)$ is a preimage of $w_{2}$
lying in $X_1(K)$.

To compute the fiber size, fix a group $Q \in C_1$.
For any $w_2 \in X_2(K)$, consider the set
$F_2 := \me{v\in f_2^{-1}(w_2) \mid \Fr(v) = Q}$.
By Lemma~\ref{lem:13}, this set has $\frac{|G_2|}{|C_2|}$ elements.

For any $v \in F_2$,
$f_1(v)$ lies in $X_1(K) \cap \phi^{-1}(w_2)$
(lying in $X_1(K)$ holds true by the same argument as in ``$\phi(X_1)\supset X_2$'').
So the sets $F_1(w_1) := \me{v\in f_1^{-1}(w_1) \mid \Fr(v) = Q}$,
$w_1 \in X_1(K) \cap \phi^{-1}(w_2)$,
form a partition of $F_2$.
By Lemma \ref{lem:13}, each set $F_1(w_1)$ has $\frac{|G_1|}{|C_1|}$ elements,
so $w_2$ has $\frac{|G_2|\cdot|C_1|}{|C_2|\cdot|G_1|}$ preimages
in $X_1(K)$.
\end{proof}
}

The following lemma can be seen as a qualitative version of Chebotarev's
density theorem, where the finite fields have been replaced by models of
our theory. However, the proof is much easier than the one of the usual
density theorem.

\begin{lem}\label{lem:chebo}
Suppose $(\VGW, C)$ is a colored Galois cover with $C \ne \emptyset$.
\begin{enumerate}
\item
There exists a model $K\models T$
such that $X(\VGW, C)(K)$ contains an element which is generic over $k$.
\item
If $K\models T$ is a model such that $W$ is irreducible over $K$
and $X(\VGW, C)(K)$ is not empty, then $X(\VGW, C)(K)$ is already dense
in $W(K)$.
\end{enumerate}
\end{lem}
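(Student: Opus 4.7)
The plan is to exploit that some $Q\in C$ comes equipped with a continuous surjection $\pi\colon\gal\surject Q$ (since $Q\in\Psub(G)$). In both parts I set $L:=k(W)$, fix an algebraic closure $\tilde L$ of $L$, and pick a $\tilde L$-point $\eta_V$ of $V$ above the generic point $\eta_W\in W(L)$; this identifies the natural projection $p\colon\Gal(\tilde L/L)\surject G$.

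For part (1), let $H:=p^{-1}(Q)\subset\Gal(\tilde L/L)$. Using Jarden-type abundance results for PAC subfields, I would choose a closed subgroup $\Gamma\subset H$ together with an isomorphism $\Gamma\cong\gal$ compatible with $\pi$ and with the projection $H\surject Q$, such that the fixed field $K:=\tilde L^\Gamma$ is perfect PAC. Then $K\models T$ and $K\supset L=k(W)$, so the generic point $\spec L\to W$ factors through a $K$-point $w$ which is generic over $k$; by construction the decomposition group $\Fr(\eta_V)=p(\Gamma)$ equals $Q$, whence $\Ar(w)=Q^G\subset C$, as required.

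For part (2), since $W(K)$ is already Zariski dense in $W$ (by PAC together with irreducibility), showing density of $X(\VGW,C)(K)$ reduces to showing that this set meets every non-empty open $U\subset W$. Given a witness $w_0$ with $\Ar(w_0)=Q^G$ for some $Q\in C$, my approach is to work with the quotient variety $Z:=V/Q$ (well-defined, since $G$ and hence $Q$ act freely on the \'etale cover $V$) together with its finite surjective map $Z\to W$. A $K$-point $z_0\in Z(K)$ corresponding to a lift of $w_0$ sits in an irreducible component $Z_0\subset Z$; since $Z_0$ is $K$-irreducible and carries a $K$-point, it is geometrically irreducible, so by PAC $Z_0(K)$ is Zariski dense in $Z_0$, and its image in $W$ is therefore Zariski dense.

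The substantive input for (1) is the existence theorem for PAC subfields with prescribed absolute Galois group; everything else there is bookkeeping. The main obstacle for (2) is arranging that the Zariski-dense image of $Z_0(K)$ in $W$ actually lies inside $X(\VGW,C)(K)$, and not merely in the larger set of $w\in W(K)$ whose Artin symbol is a conjugate of \emph{some} subgroup of $Q$: the ``bad'' $K$-points of $Z_0$---those whose $V$-lift $v$ satisfies $\Fr(v)\subsetneq Q$---come from $K$-points of $V/Q'$ for some proper $Q'\subsetneq Q$, and since $V/Q'\to V/Q$ is geometrically surjective, these cannot be excluded by a naive closed-subvariety argument. Overcoming this step appears to require either a Chebotarev-type density argument adapted to PAC fields with Galois group $\gal$ (conceivably by applying part~(1) to finer restricted covers and invoking Lemma~\ref{lem:bij} to keep track of fibers), or an induction on $[G:Q]$ reducing to the case where no proper permitted subgroup of $Q$ contributes to $C$.
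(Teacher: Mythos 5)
Your part (1) follows the right general strategy (realize the generic point of $W$ in a suitable PAC extension of $L=k(W)$ with prescribed decomposition behaviour), which is also how the paper's sources argue, but your specific construction asks for more than is available: you require $K=\tilde L^\Gamma$ to be \emph{algebraic} over $L$, i.e.\ you need a closed subgroup $\Gamma\subset\Gal(\tilde L/L)$ with $\Gamma\cong\gal$, $p(\Gamma)=Q$ and PAC fixed field. No ``Jarden-type abundance'' theorem gives this in the present generality (those results concern countable Hilbertian base fields and do not prescribe both $\Gal(K)\cong\gal$ and the image $Q$ simultaneously), and the needed $\Gamma$ can simply fail to exist while the lemma remains true: take $\dim W=0$, $k=\bbR$, $W=\spec\bbR$, $V=\spec\bbC$, $G=Q=\bbZ/2\bbZ$, $C=\me{Q}$, $2\in P$. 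Then $\Gal(\tilde L/L)$ has order $2$ and contains no copy of $\gal$, and the only algebraic extension of $\bbR$ that is PAC is $\bbC$; yet the lemma holds, witnessed by a necessarily transcendental perfect PAC field $K\supset\bbR$ with $\Gal(K)\cong\gal$ and $i\notin K$. (The same happens for $k\supseteq\bbQ_p$, where every proper algebraic extension is henselian, hence not PAC.) The correct input is the embedding-problem theorem for PAC extensions (Theorem~23.1.1 of Fried--Jarden, i.e.\ Lubotzky--van den Dries), exactly what the paper cites: from an epimorphism $\gal\surject Q=\Gal(k(V)/k(V)^Q)$ one gets a perfect PAC extension $K\supseteq k(W)$ -- in general transcendental -- with $\Gal(K)\cong\gal$ and $K\cap k(V)=k(V)^Q$. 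Since a point of $W(K)$ generic over $k$ only needs an embedding $k(W)\inject K$ over $k$, algebraicity was never required, and with this replacement your bookkeeping for (1) goes through.

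For part (2) you have correctly located the difficulty, but you have not overcome it, and it is the entire content of the statement. The image of $Z_0(K)\to W(K)$ (with $Z=V/Q$) consists of points whose Artin symbol is $(Q')^G$ for some subgroup $Q'\subseteq Q$ depending on the point, and nothing in your argument prevents $Q'$ from being a proper subgroup whose class is not in $C$ on a dense set; your two closing suggestions (a PAC Chebotarev argument, or induction on $[G:Q]$) are the missing proof, not a reduction of it. The real input is that models of $T$ are perfect Frobenius fields ($\gal$ pro-cyclic has the embedding property), so the field-crossing argument applies: with $E=K(W)$, $F$ the function field of a $K$-component of $V\otimes_k K$ with group $G'\subseteq G$ and $\hat K=F\cap\aclK$, every homomorphism $\rho\colon\Gal(K)\to G'$ compatible with the canonical projection onto $\Gal(\hat K/K)$ is realized as a decomposition homomorphism by $K$-points of $W$ in any prescribed nonempty open subset; the hypothesis $X(\VGW,C)(K)\ne\emptyset$ supplies one such $\rho$ with image in $C$, and density follows. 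This is precisely Proposition~24.1.4 of Fried--Jarden (Lemma~2.10 of Nicaise in the pseudo-finite case), which the paper invokes and which your sketch does not reprove. A smaller point: ``$Z_0$ is $K$-irreducible and carries a $K$-point, hence is geometrically irreducible'' is false for general varieties (consider $x^2+y^2=0$ over $\bbR$); it is correct here only because $V/Q$ is normal, so its geometric components are disjoint -- this should be said explicitly.
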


Part (1) follows from Theorem~23.1.1 of \cite{FJ}; part (2) follows from
Proposition~24.1.4 of \cite{FJ}. For details, see Corollary~2.9 and Lemma~2.10
of \cite{Nic:mot}: the proofs there (which are for pseudo-finite fields)
directly generalize to models of $T$.

\subsection{Existence of $\mmu$}
\label{subsect:proof}

The proof of the existence of the map $\mmu$ of Theorem~\ref{thm:main}
consists of three parts:
\begin{enumerate}
\item
Define a virtual motive associated to a colored Galois cover.
\item
Generalize this definition to Galois stratifications
and verify that the virtual motive defined in this way
only depends on the set defined by the stratification.

Using the
quantifier elimination result Lemma~\ref{lem:qe}, we thus get
a map $\mmu$ from the definable sets to the virtual motives.
\item
Check that this map $\mmu$ has all the required properties: that it
is invariant under definable bijections and compatible with
disjoint union and products (so it defines a ring homomorphism
$\KT \to \KmotQ$) and that it satisfies condition $(*)$
of Theorem~\ref{thm:main}.
\end{enumerate}

\medskip

(1)
To associate a virtual motive to a colored Galois cover
$(\VGW, C)$, one first associates a
central function $\alpha_{C}\colon G \to \bbQ$ to the coloring, and
then one uses a result from \cite{BRNA} to turn this into
a virtual motive.

More precisely,
let $C(G, \bbQ)$ be the $\bbQ$-vector space of $\bbQ$-central functions,
i.e.\ the space of functions $\alpha\colon G \to \bbQ$ such that
$\alpha(g) = \alpha(g')$ whenever $g, g' \in G$ generate
conjugate subgroups of $G$.
The following result essentially follows from Theorem~6.1
of \cite{BRNA}; see \cite{DL} or \cite{Nic:mot} for more details.

\begin{lem}\label{lem:mmu}
There exists a (unique) map $\mmu$ which associates to
each finite group $G$, each $G$-variety $V$ and each $\bbQ$-central
function $\alpha \in C(G, \bbQ)$
a virtual motive $\mmu(G\acts V, \alpha) \in \KmotQ$ and which has
the following properties:
\begin{enumerate}
\item
For any fixed $G$ and $\alpha$, the induced map
from the Grothendieck ring of $G$-varieties to
$\KmotQ$ is a group homomorphism.
\item
For any fixed $G$ and $V$, the induced map
$C(G, \bbQ) \to \KmotQ$ is $\bbQ$-linear.
\item\label{it:mmu:reg}
If $\alphareg$ is the character of the regular representation of $G$,
then $\mmu(G\acts V, \alphareg) = \mmu(V)$.
\item\label{it:mmu:pi}
Suppose $G$ is a group acting on a variety $V$, $H$ is a normal subgroup,
$\pi\colon G \surject G/H$ is the projection, and $\alpha \in C(G/H, \bbQ)$
is a $\bbQ$-central function. Then
$$\mmu(G/H\acts V/H, \alpha) = \mmu(G\acts V, \alpha\circ \pi)\pu$$
\item\label{it:mmu:ind}
Suppose $G$ is a group acting on a variety $V$, $H \subset G$
is any subgroup, and $\alpha \in C(H, \bbQ)$
is a $\bbQ$-central function. Then
$$\mmu(G\acts V, \Ind_{H}^{G}\alpha) = \mmu(H\acts V, \alpha)\pu$$
\end{enumerate}
(Several other properties are omitted. See e.g. \cite{DL}, Theorem~3.1.1
and Proposition~3.1.2 or \cite{Nic:mot}, Section~7.)
\end{lem}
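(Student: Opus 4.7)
The plan is to deduce Lemma~\ref{lem:mmu} directly from Theorem~6.1 of \cite{BRNA}, following the pattern already laid out in \cite{DL} and \cite{Nic:mot}. The BRNA theorem attaches, to each smooth projective $k$-variety $V$ with an action of a finite group $G$ and to each finite-dimensional $\bbQ$-representation $\rho$ of $G$, a well-defined element of $\KmotQ$, in an assignment that is additive in $V$ (for $G$-equivariant cut-and-paste) and in $\rho$, that sends the regular representation $\mathrm{reg}_G$ to $\mmu(V/G)=\mmu(V)$ in $\KmotQ$, and that satisfies the inflation and induction compatibilities predicted by (4) and (5). So the task reduces to two extension steps and a check of naturality.

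First I would extend the construction from smooth projective $G$-varieties to arbitrary $G$-varieties. Since $\charac k = 0$, a Bittner-type presentation of the equivariant Grothendieck group $\KvarG$ (cf.\ the argument in \cite{DL} reducing to smooth projective by equivariant resolution of singularities and the blow-up relation) produces a unique extension of $h(-,\rho)$ to a group homomorphism from $\KvarG$ to $\KmotQ$; this gives property~(1). Second, I would extend in the $\alpha$-variable: by Artin's induction theorem, characters of $\bbQ$-representations of $G$ already span $C(G,\bbQ)$ as a $\bbQ$-vector space (this is precisely why $C(G,\bbQ)$ is defined via conjugacy of cyclic subgroups rather than of elements), so the map $\rho\mapsto h(V,\rho)$ extends uniquely $\bbQ$-linearly to $C(G,\bbQ)$ provided it factors through the virtual representation ring $R_\bbQ(G)$, which is built into the BRNA construction. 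Property~(2) is then automatic.

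It remains to verify (3)--(5). Property (3) is the defining property of BRNA's construction: on the character $\alphareg$ of the regular representation one recovers $\mmu(V/G) = \mmu(V)$ when interpreted through the quotient, and hence $\mmu(G\acts V,\alphareg)=\mmu(V)$. For (4), inflations $\alpha\circ\pi$ from $C(G/H,\bbQ)$ correspond to pulled-back $\bbQ$-representations, under which the covering $V\to V/H$ together with the induced $G/H$-action gives the claimed equality directly on characters; $\bbQ$-linear extension does the rest. Property~(5) is the Frobenius-reciprocity-type compatibility built into \cite{BRNA}: on characters it reduces to the decomposition of $\mathrm{Ind}_H^G\rho$, and again passes to $C(H,\bbQ)$ by linearity. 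The only genuine obstacle is the second extension step, namely checking that every $\bbQ$-linear relation among characters of $\bbQ$-representations is already a relation among the virtual motives $h(V,\rho)$; this is exactly the content of the fact that the BRNA construction factors through $R_\bbQ(G)$, and it is where the passage from $\Kmot$ to $\KmotQ$ (that is, the appearance of denominators coming from $|G|$) becomes essential.
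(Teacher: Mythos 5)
Your proposal is correct and follows exactly the route the paper itself takes: the paper gives no independent proof of Lemma~\ref{lem:mmu}, but simply invokes Theorem~6.1 of \cite{BRNA} and defers the details to \cite{DL} (Theorem~3.1.1 and Proposition~3.1.2) and \cite{Nic:mot} (Section~7), which is precisely the construction you sketch. Your two extension steps (from smooth projective to arbitrary $G$-varieties in characteristic zero, and from characters of $\bbQ$-representations to all of $C(G,\bbQ)$ via Artin induction) together with the verification of (3)--(5) are exactly the content of those cited arguments.
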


Using this, one defines
$$
\mmu(\VGW, C) := \mmu(G\acts V, \alpha_{C})
\ko
$$
where $\alpha_C$ still has to be defined.

In the case of pseudo-finite fields,
one defines $\alpha_C$ to be $1$ on the set
$\me{g\in G\mid \gen{g} \in C}$ and $0$ elsewhere.
Just copying this definition does not work
when the Galois group is not $\Zhut$. The reason is that
the meaning of ``$Q \in C$'' is different when the Galois group of the
field is not $\Zhut$.
For example, ``$\me{1} \in C$'' means ``just a little part of $W$''
when $\gal = \Zhut$, whereas when $\gal$ is trivial,
it means ``the whole of $W$''.

To get a working definition for $\alpha_C$ in the non-$\Zhut$-case, one has
to recall that the Artin symbol is the image of a certain map
$\rho\colon \gal \to G$. Then one views
$\gal$ as a subgroup of $\Zhut$ and considers extensions of $\rho$ to $\Zhut$,
as described in the remark after Definition~\ref{defn:Ppart}.
In this way one naturally gets the following definition, which will turn
out to work:
\[
\alpha_{C}(g) := \begin{cases}
1 & \text{if } \Ppart(\gen g) \in C\\
0 & \text{otherwise}\pu
\end{cases}
\]

\medskip

(2)
We generalize the map $\mmu$ from colored Galois covers to Galois
stratifications in the obvious way:
$$
\mmu(\calA) := \sum_{i\in I} \mmu(\VGW[_i],C_i)
\pu
$$

Now suppose that two Galois stratifications $\calA$ and $\calA'$ define the
same set. To
check that the associated motives $\mmu(\calA)$ and $\mmu(\calA')$
are the same, we use Lemma~\ref{lem:refine}. It is enough to show that
(a) refining a stratification does not change the motive and that
(b) if two colorings of a Galois cover define the same set, then these
colorings are equal. Refinement of stratifications decomposes into
two parts: (a1) refining the underlying sets $W_i$ and (a2) refining
the Galois covers.

(a1) is straight forward.

(a2) We have to show that $\mmu(\VGW, C) = \mmu(V'\overto{G'}W, C')$
where $(\VGW, C)$ is a colored Galois cover and $(V'\overto{G'}W, C')$ is a
refinement.
By Lemma~\ref{lem:mmu} (\ref{it:mmu:pi}), it is enough to check that
$\alpha_{C'} = \alpha_{C} \circ \pi$, where $\pi\colon G'\surject G$
is the canonical map. But indeed we have, for any $g' \in G'$:
\[
\begin{split}
\alpha_{C'}(g') = 1
&\iff
\Ppart(\gen{g'}) \in C'
\iff
\pi(\Ppart(\gen{g'})) \in C\\
&\iff
\Ppart(\gen{\pi(g')}) \in C
\iff
\alpha_{C}(\pi(g')) = 1
\pu
\end{split}
\]

(b) follows from Lemma~\ref{lem:chebo}.
Suppose that $C_1$ and $C_2$ are two different colorings of the Galois cover
$\VGW$. Then there exists a conjugacy class $C \subset C_1 \ohne C_2$
(or vice versa), and
the lemma yields a model $K$ such that $X(\VGW, C_1)(K) \supsetneq X(\VGW,
C_2)(K)$.

\medskip

(3)
Checking that $\mmu$ is compatible with disjoint unions
and with products is straight forward. (For the products, one uses
a product property of the map $\mmu$ of Lemma~\ref{lem:mmu};
cf.\ Lemma~8.7 of \cite{Nic:mot}).

We have to check condition $(*)$ of Theorem~\ref{thm:main}, i.e.
\[
(*)\qquad \mmu(X(\VGW, \me{1})) = \frac{1}{|G|}\mmu(V)\ko
\]
where all prime factors of $|G|$ lie in $P$.
By Lemma~\ref{lem:mmu}, (\ref{it:mmu:reg}), it is enough to show
that $\alpha_{\me{1}} = \frac{1}{|G|}\alphareg$, where
$\alphareg$ is the character of the regular representation of $G$.
But indeed:
$\alpha_{\me{1}}(g) = 1$ if $\Ppart(\gen{g}) = \me{1}$ and
$\alpha_{\me{1}}(g) = 0$ otherwise. As
all prime factors of $|G|$ lie
in $P$, we have $\Ppart(\gen{g}) = \gen{g}$, so 
$\alpha_{\me{1}}(g) = 1$ only if $g = 1$.

The last property to prove is invariance under definable bijections.
We do this by first reducing the problem several times, until we are in
the situation of Lemma~\ref{lem:bij}.
\begin{itemize}
\item
A definable bijection $\phi\colon X \to X'$ also yields bijections
to the graph of $\phi$, so we may suppose that the map $X \to X'$
is the restriction of a projection (which we also denote by $\phi$).
\item
We may suppose $X = X(\VGW, C)$ by treating each component of $X$
separately. (Replace $X'$ by the image of that component.)
\item
Next we may suppose $X' = X(\VGW['], C')$ by treating each component of $X'$
separately. One easily checks that the new preimage $X$
is still defined by a single Galois cover.
(Note that for this, the order of this and the previous
step is important.)

\BLAU
To do this, we intersect the original set $X'$ with $W'$,
so $X$ is intersected with $\phi^{-1}(W')$; therefore, $X$ is still defined
by a single Galois cover.

\item
Using the density statements of Lemma~\ref{lem:chebo} and Noetherian
induction,
we may suppose that the map $\phi\colon W \to W'$ is finite and \'etale.
By refining the Galois covers, we may suppose $V = V'$.
\end{itemize}

We now have the following diagram:
\[
\begin{array}{c@{\qqquad}c}
\Rnode{V}{V}\\[4ex]
\Rnode{W1}{W} & \Rnode{W2}{W'}
\end{array}
\ncline{->>}{V}{W1}\Bput{G}
\ncline{->>}{V}{W2}\Aput{G'}
\ncline{->>}{W1}{W2}\Bput{\phi}
\]
By decomposing once more and using Lemma~\ref{lem:mmu},
we may suppose that $C$ consists of a single
conjugacy class of subgroups of $G$ and $C' = C^{G'}$ is the induced
class in $G'$.
Moreover, we get $\frac{|C|}{|G|} = \frac{|C'|}{|G'|}$, as
by assumption $\phi$ induces a bijection
$X(\VGW, C)(K) \to X(V\overto{G'}W', C')(K)$.
(Choose $K$ using
Lemma~\ref{lem:chebo} such that $X(V\overto{G'}W', C')(K)$ is not empty.)

We want to show $\mmu(\VGW, C) = \mmu(V\overto{G'}W', C')$.
By Lemma~\ref{lem:mmu} (\ref{it:mmu:ind}),
it is enough to show that $\alpha_{C'} = \Ind_G^{G'}\alpha_{C}$.

Set
\[
\begin{aligned}
\Chut &:= \me{\gen{g} \subset G \mid \alpha_C(g) = 1}
 = \me{\gen{g} \subset G \mid \Ppart(\gen{g}) \in C} \quad\text{and}
\\
\Chut' &:= \me{\gen{g'} \subset G' \mid \alpha_{C'}(g') = 1}
 = \me{\gen{g'} \subset G \mid \Ppart(\gen{g'}) \in C'}
\pu
\end{aligned}
\]
We want to understand the relation between $\Chut$ and $\Chut'$.
For this,
consider the map $\eta\colon \Chut' \to C', Q \mapsto \Ppart(Q)$.
It maps $\Chut$ to $C$. We claim that $\Chut$ is exactly the preimage
of $C$ under $\eta$. For this, we have to verify that for any
group $Q \in \Chut'$ with $\Ppart(Q) \in C$,
we already have $Q \subset G$. Indeed:
$Q$ is abelian, so it is contained in $\Norm_{G'}(\Ppart(Q))$,
and $\Norm_{G'}(\Ppart(Q))$ is contained in $G$.

Now using that $C$ consists of a single conjugacy class and that
$\eta$ commutes with conjugation,
we arrive at two conclusions: $\Chut' = \Chut^{G'}$ and
$\frac{|\Chut'|}{|C'|} = \text{fiber size of $\eta$} = \frac{|\Chut|}{|C|}$.

Using this, we can finally compute $\Ind_G^{G'}\alpha_{C}$.
For any $g' \in G'$, we have
\[
\Ind_G^{G'}\alpha_{C}(g') = \frac{1}{|G|}
\#\me{h \in G' \mid \gen{hg'h^{-1}} \in \Chut}
\pu
\]
This is zero if $\gen{g'} \notin \Chut^{G'} = \Chut'$. Otherwise:
\[
\dots = \frac{1}{|G|}\cdot |\Chut|\cdot |\Norm_{G'}(\gen{g'})|
= \frac{|\Chut|}{|G|}\cdot \frac{|G'|}{|\Chut'|} = 1
\pu
\]
(In the last equality, we combine $\frac{|C|}{|G|} = \frac{|C'|}{|G'|}$ and
$\frac{|\Chut'|}{|C'|} = \frac{|\Chut|}{|C|}$.)

\subsection{The uniqueness statement}
\label{subsect:unique:proof}

We now prove the uniqueness of the map $\mmu$ in
the case of pseudo-finite fields. For this, we only
need following properties of $\mmu$:
it extends the usual map $\mmu\colon\Kvar \to \Kmot$,
it is invariant under definable bijections,
it is compatible with disjoint unions, and for 
any Galois cover $\VGW$, the equality
\[
(*)\qquad \mmu(X(\VGW, \me{1})) = \frac{1}{|G|}\mmu(V)
\]
holds.

In particular, we will not need that $\mmu$ is compatible
with products.

\begin{proof}[Proof of uniqueness in Theorem~\ref{thm:main}]
By Lemma~\ref{lem:qe} (quantifier elimination to Galois formulas)
and compatibility with disjoint unions, it is enough
to prove uniqueness for definable sets of the form
$X(\VGW, C)$, where $(\VGW, C)$ is a colored Galois cover
and $C = Q^G$ consists of a single conjugacy class of cyclic subgroups of $G$.

We proceed by induction on $|G|$ and $|Q|$. (We will suppose that
the statement is true for $G$ of the same size and $Q$ smaller
and vice versa.)

\BLAU Kein Ind. Anfang noetig.

Suppose first that $Q$ is not normal in $G$.
Let $G' := \Norm_G(Q)$ be its normalizer and $W' := V/G'$.
Note that $C' := Q^{G'} = \me{Q}$.
By induction, we know $\mmu(X(V\overto{G'}W', C'))$.
We have $\frac{|G'|}{|C'|} = \frac{|G|}{|C|}$, so Lemma~\ref{lem:bij}
implies that the map $W' \to W$ induces a bijection
$X(V\overto{G'}W', C') \to X(\VGW, C)$. So by
assumption $\mmu(X(\VGW, C)) = \mmu(X(V\overto{G'}W', C'))$.

Now suppose $Q$ is normal in $G$ (and in particular $C = \me{Q}$).
Let $G' := G/Q$
and $V' := V/Q$. We know
$\mmu(X(V'\overto{G'}W, \me{1}))$ by $(*)$, and
we have $X(V'\overto{G'}W, \me{1}) = X(\VGW, C_1)$, where
$C_1 = \{Q_1 \in \Psub(G) \mid Q_1\subset Q\}$ consists of all
(cyclic) subgroups of $G$ contained in $Q$.
But for any strict subgroup $Q_1 \subsetneq Q$, we know
$\mmu(X(\VGW, Q_1^G))$ by induction. So
$\mmu(X(\VGW, Q))$ is the only (up to now) unknown term
in the equation
\[
\mmu(X(\VGW, C_1)) = \!\!\!\!\!\!\!\!\!\!\!\!
\sum_{\genfrac{}{}{0pt}1{C_2 \subset C_1}{C_2 \text{ one conjugacy class}}}
\!\!\!\!\!\!\!\!\!\!\!\! \mmu(X(\VGW, C_2))
\pu
\]
\end{proof}

\section{Maps between Grothendieck rings}
\label{sect:reduce}

In this section we first prove the existence of the map $\theta_\iota$ between the different
Grothendieck rings $\KT$ (Theorem~\ref{thm:reduce}) and then
apply this to get Proposition~\ref{prop:example}.
Finally we check a compatibility between the maps $\theta_\iota$ and the maps $\mmu$.

\subsection{Existence of the maps $\theta_{\iota}$}

Recall the statement of the theorem.
We have a field $k$ and an inclusion of torsion-free pro-cyclic groups
$\iota\colon \gal_2 \inject \gal_1$.
For simplicity, we will now identify $\gal_2$ with $\iota(\gal_2) \subset \gal_1$.
Denote by
$T_i := T_{\gal_i, k}$ the theory of perfect PAC fields with
Galois group $\gal_i$ and which contain $k$.

The map $\theta := \theta_\iota\colon \KT[2] \to \KT[1]$ was defined
as follows.
Any model $K_1$ of $T_1$ yields a model
$K_2 := \acl{K}_1^{\gal_2}$ of $T_2$.
For any $X_2 \subset \bbA^n$ definable in $T_2$, we defined
$\theta(X_2)(K_1) = X_2(K_2) \cap K_1^n$.

What we have to check is:
\begin{enumerate}
\item
$X_2(K_2) \cap K_1^n$ is definable (uniformly for all $K_1$).
\item
If there is a definable bijection $X_2 \to X_2'$ in $T_2$, then
there is also a definable bijection $\theta(X_2) \to \theta(X_2')$ in
$T_1$.
\item
$\theta$ is a ring homomorphism, i.e.\ compatible with
disjoint unions and products.
\end{enumerate}

The third statement is clear by definition.

(1)
Any definable set $X_2$ of $T_2$ can be written as disjoint union
of sets of the form $X(f\colon\VGW, C_2)$, where $C_2$ is a conjugacy class
of permitted subgroups of $G$, so it is enough to prove that $\theta$
maps such sets to definable ones. We claim:
$\theta(X(\VGW, C_2)) = X(\VGW, C_1)$, where $C_1$ is defined as follows:
Let $M$ be the set of
homomorphisms $\rho_1 \colon \gal_1 \to G$ such that
$\rho_1(\gal_2) \in C_2$. Then $C_1$ is the set of images of these
homomorphisms $M$. In a formula:
\[
C_1 = \me{\im \rho_1 \mid \rho_1 \colon \Gal_1 \to G, \rho_1(\Gal_2) \in C_2}
\pu
\]
We have to check: For any model $K_1$ of $T_1$ and any element
$w \in W(K_1)$, we have $w \in  X(\VGW, C_1)(K_1)$ if and only if
$w \in X(\VGW, C_2)(K_2)$, where $K_2 = \acl{K}_1^{\gal_2}$
as above.

Choose an element $v \in V(\acl{K_1})$ with $f(v) = w$.
We get a homomorphism $\rho_1\colon \gal_1 \to G$ defined by
$\sigma(v) = v.\rho_1(\sigma)$ for any $\sigma \in \gal_1$.
Of course the restriction $\rho_2 := \rho_1\auf{\gal_2}$ satisfies the
same property.
By definition, we have $w \in  X(\VGW, C_1)(K_1)$
if and only if $\im \rho_1 \in C_1$ and $w \in  X(\VGW, C_2)(K_2)$
if and only if $\im \rho_2 = \rho_1(\Gal_2) \in C_2$. So we have to check that
for any $\rho_1\colon \gal_1 \to G$ we have
$\im\rho_1 \in C_1$ if and only if $\rho_1(\Gal_2) \in C_2$.

``$\Leftarrow$'' is clear by the definition of $C_1$.

``$\Rightarrow$'': Suppose $Q_1 := \im \rho_1 \in C_1$. By the definition of
$C_1$, there is a homomorphism $\rho_1' \in M$ with
$\im\rho_1' = Q_1$. As $\gal_1$ is pro-cyclic, homomorphisms
$\gal_1 \to Q_1$ are determined by the image of a generator, so we can write
$\rho_1 = \alpha \circ \rho_1'$ for some automorphism
$\alpha \in \Aut(Q_1)$. As $Q_1$ is cyclic, all its subgroups are
characteristic subgroups, so
$\rho_1(\gal_2) = \alpha(\rho_1'(\gal_2)) = \rho_1'(\gal_2)
\in C_2$. This implies $\rho_{1} \in C_{1}$.

\BLAU Dieses Argument geht schief z.B. bei $Gal_2 = \Zhut \subset \Zhut*\Zhut =
\gen{a_1,a_{-1}} = \gal_1$: Die Abbildungen $\rho_1,\rho_{-1}$ nach $\bbZ/2$
mit $\rho_i(a_i) = 1$, $\rho_i(a_{-i}) = 0$ haben beide das selbe Bild, aber
eingeschraenkt auf $\bbZ$ nicht mehr. Man sieht, dass die Menge $M$,
die von $C_1$ definiert wird, zu gross ist.

(2) Suppose $X_2 \subset \bbA^n$ and $X_2' \subset \bbA^{n'}$
are two definable sets in $T_2$ and
$f\colon X_2 \to X_2'$ is a definable bijection. We have to show
that there is a $T_1$-definable bijection $\theta(X_2) \to \theta(X_2')$.
Indeed, we will check that $\theta(f)$ is such a bijection.
In other words we have to verify the following statement:

Let $K_1$ be any model of $T_1$ and $K_2 = \acl{K}_1^{\gal_2}$.
Then for any $x \in X_2(K_2)$ and $x' := f(x) \in X_2'(K_2)$,
we have $x \in K_1^n$ if and only if $x' \in K_1^{n'}$.

Suppose $x \notin K_1^n$.
Then there exists a $\sigma \in \Gal(K_2/K_1)$ moving $x$. But
$\sigma(X_2(K_2)) = X_2(K_2)$, so $\sigma(x) \in X_2$. As $f$ is
injective on
$X_2(K_2)$, this implies $\sigma(f(x)) = f(\sigma(x)) \ne f(x)$, so
$f(x) \notin K_1^{n'}$.

The other direction works analogously.
\hspace*{\fill}\qed

\subsection{$\mmu$ is not injective}
\label{subsect:reduce:ex}

As an example application of the maps $\theta_\iota$,
we will now prove Proposition~\ref{prop:example}. To this end,
we will construct a pair of definable sets $X_1$ and
$X_2$ such that $\mmu(X_1) = \mmu(X_2)$ but 
$\mmu(\theta_\iota(X_1)) \ne \mmu(\theta_\iota(X_2))$
for a suitable map $\iota\colon \gal \inject \gal$.
(In fact, we will construct a whole bunch of such pairs.)

\begin{proof}[Proof of Proposition~\ref{prop:example}]
Recall that $\gal$ is a non-trivial subgroup of $\Zhut$,
i.e.\ $\gal = \prod_{p \in P} \bbZ_p$, where $P$ is a non-empty
set of primes.

For $n \in \bbN_{\ge1}$,
consider the group homomorphism
$\iota\colon \gal \inject \gal, \sigma \mapsto \sigma^n$.
Applying Theorem~\ref{thm:reduce} to this map gives an endomorphism
$\theta_n$ of $\KT[\Zhut,k]$, which can be explicitly computed
on sets defined by Galois covers as follows. Let $(\VGW, C_2)$ be a colored Galois
cover. The computation in the proof of Theorem~\ref{thm:reduce} shows that
$\theta_n(X(\VGW, C_2)) = X(\VGW, C_1)$, where
$C_1 = \me{Q \in \Psub(G) \mid Q^n \in C_2}$ consists of those
permitted subgroups of $G$ whose subgroups of $n$-th powers lie in $C_2$.

Note that $\theta_n$ is interesting only if $n$ has prime factors which lie in $P$;
otherwise, $n$ and $|Q|$ are coprime for any permitted subgroup $Q \subset G$,
which implies $Q = Q^n$, $C_1 = C_2$, and $\theta_n = \id$.

Now let $\VGW$ be any non-trivial Galois cover
such that all prime factors of $|G|$
lie in $P$, and define $X := X(\VGW, \me{\id})$.
By condition $(*)$ of Theorem~\ref{thm:main},
we have $\mmu(X) = \frac{1}{|G|}\mmu(V)$,
so $\mmu(X \times G) = \mmu(V)$.
(Here $G$ is interpreted as a discrete set.)
However, we will see that for $n = |G|$, we have
$\mmu(\theta_n(X \times G)) \ne \mmu(\theta_n(V))$.

As $\theta_n$ is
the identity on $\Kvar$, we have $\theta_n(V) = [V]$. On the other hand,
the subgroup of $n$-th powers of any cyclic subgroup of $G$ is trivial,
so $\theta_n(X) = [X(\VGW, \Psub(G))] = [W]$
and $\theta_n(X\times G) = [W \times G]$. But
$V$ and $W \times G$ are two varieties with a different number
of irreducible components of maximal dimension, so 
$\mmu(\theta_n(X \times G)) \ne \mmu(\theta_n(V))$.
\end{proof}

\BLAU
Beispiel: Cover $x \mapsto x^2$ auf $\bbA^1\ohne\me{0}$.
Dann sagt das obige Gegenbeispiel, dass
es keine defbare Bijektion gibt zwischen
$\me{\text{quadrate}}\dcup \me{\text{quadrate}}$ und $\bbA^1\ohne\me{0}$.
Daraus folgt: Es gibt keine defbare Bijektion zwischen 
$\me{\text{quadrate}}$ und $\me{\text{nicht-quadrate}}$. In jedem festen
PSF-Koerper gibt's zwar so was, aber offenbar nicht uniform fuer alle
PSF-Koerper.

\subsection{Compatibility of $\mmu$ and $\theta_\iota$}
\label{subsect:coincide}

We prove the following compatibility statement:

\begin{prop}\label{prop:coincide}
Suppose $k$ is a field of characteristic zero
and $\gal_2 \subset \gal_1$ are two torsion-free pro-cyclic groups
such that $\gal_1/\gal_2$ is torsion-free, too.
We use the following notation: $T_i := T_{\gal_i, k}$
(for $i=1,2$) are the corresponding theories,
$\mmu^i\colon \KT[i] \to \KmotQ$ are the maps of Theorem~\ref{thm:main},
and $\theta\colon\KT[2] \to \KT[1]$ is the map provided by
Theorem~\ref{thm:reduce} applied to the inclusion $\gal_2 \subset \gal_1$.
Then we have:
\[
\mmu^2 = \mmu^1 \circ \theta
\pu
\]
\end{prop}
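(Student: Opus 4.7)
The plan is to check the identity on generators of $\KT[2]$ and then use the $\bbQ$-linearity of the virtual-motive construction from Lemma~\ref{lem:mmu}. By Lemma~\ref{lem:qe} and additivity, it suffices to prove $\mmu^2(X) = \mmu^1(\theta(X))$ for $X$ of the form $X(\VGW, C_2)$, where $(\VGW, C_2)$ is a colored Galois cover over $k$. From the proof of Theorem~\ref{thm:reduce} we already know that $\theta(X(\VGW, C_2)) = X(\VGW, C_1)$, with
\[
C_1 = \me{\im \rho_1 \mid \rho_1\colon \gal_1 \to G,\ \rho_1(\gal_2) \in C_2}\pu
\]
By the construction in Section~\ref{subsect:proof} we have $\mmu^i(X(\VGW, C_i)) = \mmu(G \acts V, \alpha^i_{C_i})$, where the superscript indicates that $\alpha^i_{C_i}(g) = 1$ iff $\Ppart_i(\gen g) \in C_i$, with $\Ppart_i$ computed relative to $\gal_i = \prod_{p\in P_i}\bbZ_p$. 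Hence it is enough to show the equality of $\bbQ$-central functions $\alpha^2_{C_2} = \alpha^1_{C_1}$ on $G$.

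The main step is this pointwise identity. Fix $g \in G$ and set $Q := \gen g$, $Q_i := \Ppart_i(Q) \subset Q$. From the hypothesis that $\gal_1/\gal_2$ is torsion-free (together with both groups being pro-cyclic), the inclusion $\gal_2 \subset \gal_1$ must be the obvious one coming from an inclusion $P_2 \subset P_1$ of sets of primes, so $Q_2 \subset Q_1$. By definition, $\alpha^1_{C_1}(g) = 1$ iff $Q_1 \in C_1$, i.e.\ iff there is a homomorphism $\rho_1\colon \gal_1 \to G$ with $\im \rho_1 = Q_1$ and $\rho_1(\gal_2) \in C_2$. Any such $\rho_1$ factors through the canonical projection $\gal_1 \surject Q_1$, and differs from it only by an automorphism of $Q_1$. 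Under the canonical projection the image of $\gal_2$ is precisely $Q_2$; since $Q_1$ is cyclic, $Q_2$ is a characteristic subgroup of $Q_1$, so every automorphism fixes it setwise. Consequently $\rho_1(\gal_2) = Q_2$ for every such $\rho_1$, giving $Q_1 \in C_1 \iff Q_2 \in C_2$, which is exactly $\alpha^1_{C_1}(g) = \alpha^2_{C_2}(g)$.

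The harder conceptual point is the last one: understanding why the ``enlarged'' coloring $C_1$ produced by Theorem~\ref{thm:reduce} still records exactly the information needed by $\alpha^2_{C_2}$, and this is precisely where the torsion-freeness of $\gal_1/\gal_2$ enters (it forces the inclusion $\gal_2 \hookrightarrow \gal_1$ to be the natural one, so that taking the $P_2$-part commutes with projecting onto any cyclic quotient). Once the identity $\alpha^2_{C_2} = \alpha^1_{C_1}$ is established, Lemma~\ref{lem:mmu} gives
\[
\mmu^1(\theta(X(\VGW, C_2))) = \mmu(G \acts V, \alpha^1_{C_1}) = \mmu(G \acts V, \alpha^2_{C_2}) = \mmu^2(X(\VGW, C_2))\ko
\]
and linear extension yields the full equality $\mmu^2 = \mmu^1 \circ \theta$.
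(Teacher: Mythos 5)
Your proposal is correct and follows essentially the same route as the paper: reduce to sets of the form $X(\VGW, C_2)$, use the explicit description of $C_1$ from the proof of Theorem~\ref{thm:reduce}, and show $\alpha_{C_1} = \alpha_{C_2}$ via $\Ppart_2(\Ppart_1(\gen g)) = \Ppart_2(\gen g)$, which your factorization-through-$Q_1$ argument (using that $\gal_2$ is the direct factor of $\gal_1$ corresponding to $P_2\subset P_1$ and that subgroups of cyclic groups are characteristic) establishes in slightly more detail than the paper does.
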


\begin{proof}
For $i=1, 2$ let $P_{i}$ be the set of primes such that
$\gal_i = \prod_{p \in P_i} \bbZ_p$. We have
$P_2 \subset P_1$, and
$\gal_2$ is just the factor of $\gal_1$ corresponding to $P_2$.
We will write $\Psub_i$ resp.\ $\Ppart_i$
for the permitted subgroups and the permitted part
to distinguish between the two Galois groups.

We only have to verify the statement for sets of the form
$X(\VGW, C_2)$, where $(\VGW, C_2)$ is a colored Galois cover
for $T_2$.
By the proof of Theorem~\ref{thm:reduce}, we have
$\theta(X(\VGW, C_2)) = X(\VGW, C_1)$, where
$C_1$ consists of the images of those maps $\rho\colon \gal_1 \to G$
which satisfy $\rho(\gal_2) \in C_2$.
As $\gal_2$ is a direct factor of $\gal_1$,
we get $C_1 = \me{Q \in \Psub_1(G) \mid \Ppart_2(Q) \in C_2}$.

Now recall the definition of $\mmu^i$:
$\mmu^i(X(\VGW, C_i)) = \mmu(G\acts V, \alpha_{C_i})$, where
\[
\alpha_{C_i}(g) := \begin{cases}
1 & \text{if } \Ppart_i(\gen g) \in C_i\\
0 & \text{otherwise.}
\end{cases}
\]
But $\Ppart_1(\gen g) \in C_1$ if and only if
$\Ppart_2(\Ppart_1(\gen g)) = \Ppart_2(\gen g) \in C_2$,
so $\alpha_{C_1} = \alpha_{C_2}$, and the claim is proven.
\end{proof}

\section{Open problems}
\label{sect:open}

\subsection{Uniqueness of $\mmu$}

In the case of pseudo-finite fields, the conditions given in
Theorem~\ref{thm:main} are enough to render $\mmu$ unique.
One would like to have a similar uniqueness statement in the other cases.
Unfortunately, the condition
\[(*)\qquad\mmu(X(\VGW, \me{1})) = \frac{1}{|G|}\mmu(V)
\]
is false in general if $|G|$ has prime factors not in $P$
(where $\gal = \prod_{p \in P} \bbZ_p$). For algebraically
closed fields for example, we have
$\mmu(X(\VGW, \me{1})) = \mmu(W)$, which is not equal to
$\frac{1}{|G|}\mmu(V)$ unless $G$ is trivial.

The first question is: is the weak version of $(*)$
(when one requires all prime factors of $|G|$ to lie in $P$)
enough to get uniqueness? And if not:
is there some other nice condition rendering $\mmu$ unique?
One fact suggesting that the weak condition might already be strong enough
is that this is true indeed for algebraically closed fields.

\BLAU Ausserdem: Die Abbildungen $\mmu \circ \theta_n$ erfuellen $(*)$
nicht; das sind also schon mal keine Gegenbeispiele.

\subsection{From motives to measure}

The parallels between the definitions of the virtual motive associated to
a definable set and the measure of such a set (\cite{CDM}, \cite{i:dm})
suggest that one should
be able to extract the measure from the motive. More precisely,
fix a perfect PAC field $K$ of characteristic zero with pro-cyclic Galois
group $\gal$.
Note that there are two theories around now:
$T_{\gal,K}$, the theory of pseudo-finite fields containing $K$
(which is not complete) and $\Th(K)$, the (complete) theory of $K$ itself.

Denote by $\dim\colon \KO(\Th(K)) \to \bbN$
the dimension of \cite{CH:dim} (which needs not coincide
with the usual dimension for varieties: only components ``visible over
$K$'' are considered) and by $\mu\colon \KO(\Th(K)) \to \bbQ$
the measure of \cite{i:dm}.
The question is whether a dotted map in the following diagram
exists making the diagram commutative.
\[
\begin{array}{c@{\qquad}c}
\Rnode{KT}{\KT[K]} & \Rnode{ThK}{\KO(\Th(K))}  \\[4ex]
\Rnode{Kmot}{\KmotQ[K]} & \Rnode{NQ}{\bbN \times \bbQ}
\end{array}
\ncline{->>}{KT}{ThK}
\ncline{->}{KT}{Kmot}\Aput{\mmu}
\ncline{->}{ThK}{NQ}\Aput{(\dim, \mu)}
\ncline[linestyle=dashed,dash=.5mm .6mm]{->}{Kmot}{NQ}
\]

If $K$ is algebraically closed, then this is obviously true:
In this case $\mu(V)$ is just the number of irreducible components
of maximal dimension
of $V$, and both this and the dimension of $V$ (which is the usual
one in this case) can be seen in the corresponding motive.

If $K$ is pseudo-finite, this is true, too: Let $X$ be a definable
set of $T_{\gal,K}$. Then it makes sense to speak about $X(F)$ for finite
fields $F$ of almost all characteristics.
Lemma~3.3.2 of \cite{DL} states that for almost all characteristics,
the number of points $|X(F)|$ is encoded in the motive.
(Not very surprisingly, it is the trace of the Frobenius automorphism
on the motive.)
The dimension and the measure of $X$ in $K$ can be computed from these
cardinalities.

The way one extracts the dimension and the measure
from the motive seems quite different in the two above cases.
This suggests that one might get interesting new insights by
generalizing this to arbitrary pro-cyclic Galois groups.

\BLAU Selbst wenn man es nicht schafft, anzugeben, wie man das
Mass aus dem Motiv ablesen kann, koennte man vielleicht auf
nicht-konstruktive Weise zeigen, dass das Mass ueber das Motiv
faktorisiert.

\subsection{Larger Galois groups for the maps $\theta_\iota$}

The quantifier elimination result of \cite{FJ} does not only work
for fields with pro-cyclic Galois groups, but for some larger Galois
groups as well. (The Galois group has to satisfy what Fried-Jarden
call the ``embedding property''.)
It seems plausible that Theorem~\ref{thm:reduce} should be
generalizable to this context as well. However the proof will need
some modifications. Indeed for $\gal_1 = \Zhut * \Zhut = \gen{a,b}$
and $\gal_2 = \gen{a} \subset \gal_1$, one can construct
a $T_{2}$-definable set $X = X(\VGW, C)$ such that $\theta(X)$
is not definable using the same Galois cover $\VGW$.

\BLAU
Suppose $\gal_1 = \Zhut * \Zhut = \gen{a,b}$, $\gal_2 = \gen{a} \subset
\gal_1$, and suppose $f\colon\VGW$ is a Galois cover with group
$G = \me{\pm1}$. Consider the following two maps:
\[
\begin{aligned}
\rho\colon \gal_1\to G&, a\mapsto 1, b\mapsto -1\\
\rho'\colon \gal_1\to G&, a\mapsto -1, b\mapsto -1
\end{aligned}
\]
and suppose there are two elements $v, v' \in V(\aclK_1)$
with $f(v), f(v')\in K_1$ and such that
$\sigma(v) = v.\rho(\sigma)$
and $\sigma(v') = v'.\rho'(\sigma)$ for $\sigma \in \gal_1$.
(Do such elements
necessarily exist? A version of Chebotarev's density theorem for
fields with bigger Galois group would be interesting.)

\BLAU
Then $v$ and $v'$ have the same Artin symbol relative to $T_1$,
namely $\{G\}$ but different Artin symbols relative to $T_2$,
namely $\{1\}$ resp. $\{G\}$. This implies that
$\theta(X(\VGW, \me{G}))$ can not be defined using the
Galois cover $\VGW$.

\subsection{Larger Galois groups for the maps $\mmu$}

Another natural question is whether the map $\mmu$ can also be defined
for fields with larger Galois group.
However, in \cite{i:dm} we already showed that the measure of \cite{CDM}
does not extend to this generality. Indeed,
no measure exists for example if the
Galois group is $\Zhut * \Zhut$. This suggests
that it is neither possible to associate motives to definable sets
of such theories. Probably, $T_{\Zhut * \Zhut, k}$ contains
too many definable bijections so that the corresponding Grothendieck
ring gets too small. One
might even hope to show that $\KT[\Zhut * \Zhut, k]$ is trivial.

\BLAU Das Problem bei Uebernahme des Beispiels aus \cite{i:dm} ist, dass
dort uniqueness explizit verwendet wurde. Man muesste also statt dessen
vermutlich ganz konkrete Galois-Cover basteln...

\subsection{What exactly do we know about $\KT$?}

We showed that the maps $\mmu$ do not yield the full information
about the definable sets and we showed how additional information can
be obtained using the maps $\theta_\iota$. The question is now:
how much information do we get using all maps $\theta_\iota$?
More precisely, suppose $X_1$ and $X_2$ are two definable sets
in $T_{\gal,k}$, and
suppose that for any injective endomorphism $\iota\colon \gal \inject \gal$
we have $\mmu(\theta_{\iota}(X_1)) = \mmu(\theta_{\iota}(X_2))$.
What does this tell us about $X_1$ and $X_2$ (as elements of $\KT$)?

The best we could hope would be $[X_1] = [X_2]$, but this is
wrong in the case $\gal = \me{1}$: There are no non-trivial maps
$\theta_{\iota}$, and the map $\mmu\colon \KT \to \Kmot$ is known
to be non-injective for algebraically closed fields.

So what one could really hope for would be that ``apart from this'', the maps
$\mmu\circ\theta_\iota$ yield all additive information about the definable sets
of $T_{\gal,k}$.
The first open problem here is to give a precise meaning to this
statement.

\BLAU
Koennte man das $\mmu$ nicht ueber $\Kvar_{\bbQ}$ faktorisieren lassen?
Aber: Was ist $\Kvar_{\bbQ}$ ueberhaupt, und ist $\Kvar \to \Kvar_{\bbQ}$
injektiv?


\def\cprime{$'$}
\def\bysame{\leavevmode ---------\thinspace}
\def\og{``}\def\fg{''}
\def\cdrandname{\&}
\providecommand\cdrnumero{no.~}
\providecommand{\cdredsname}{eds.}
\providecommand{\cdredname}{ed.}
\providecommand{\cdrchapname}{chap.}
\providecommand{\cdrmastersthesisname}{Memoir}
\providecommand{\cdrphdthesisname}{PhD Thesis}

\end{document}